\newtheorem{thm}{Theorem}[section]
\newtheorem{prop}[thm]{Proposition}
\newtheorem{lem}[thm]{Lemma}
\newtheorem{cor}[thm]{Corollary}
\theoremstyle{definition}
\newtheorem*{assumption}{Assumptions}
\theoremstyle{remark}
\newtheorem{remark}[thm]{Remark}
\numberwithin{equation}{section}
\newcommand{\ff}{\mathfrak{f}}
\newcommand{\fg}{\mathfrak{g}}
\newcommand{\Aut}{\mathrm{Aut}}
\newcommand{\Hol}{\mathrm{Hol}}
\newcommand{\Hom}{\mathrm{Hom}}
\newcommand{\conj}{\mathrm{conj}}
\newcommand{\pmmod}{\hspace{-3mm}\pmod}
\begin{document}

\large 

\title[Hopf-Galois structures and skew braces]{Hopf-Galois structures on cyclic extensions \\and skew braces with cyclic multiplicative group}
\author{Cindy (Sin Yi) Tsang}
\address{Department of Mathematics, Ochanomizu University, 2-1-1 Otsuka, Bunkyo-ku, Tokyo, Japan}
\email{tsang.sin.yi@ocha.ac.jp}
\urladdr{http://sites.google.com/site/cindysinyitsang/} 

\date{\today}

\maketitle

\vspace{-5mm}

\begin{abstract}
Let $G$ and $N$ be two finite groups of the same order. It is well-known that the existences of the following are equivalent.
\begin{enumerate}[$(a)$]
\item a Hopf-Galois structure of type $N$ on any Galois $G$-extension
\item a skew brace with additive group $N$ and multiplicative group $G$
\item a regular subgroup isomorphic to $G$ in the holomorph of $N$ 
\end{enumerate}
We shall say that $(G,N)$ is \emph{realizable} when any of the above exists. Fixing $N$ to be a cyclic group, W. Rump (2019) has determined the groups $G$ for which $(G,N)$ is realizable. In this paper, fixing $G$ to be a cyclic group instead, we shall give a complete characterization of the groups $N$ for which $(G,N)$ is realizable.
\end{abstract}

\vspace{-1mm}

\tableofcontents

\vspace{-6mm}

\section{Introduction}

Let $G$ and $N$ be two finite groups of the same order. It is known that the existences of the following are equivalent (see \cite[Chapter 2]{Childs book} and \cite{Skew braces}).
\begin{enumerate}[$(a)$]
\item a Hopf-Galois structure of type $N$ on any Galois $G$-extension
\item a skew brace with additive group $N$ and multiplicative group $G$
\item a regular subgroup isomorphic to $G$ in the holomorph of $N$ 
\end{enumerate}
Here, the \emph{holomorph} of $N$ is defined to be
\[ \mathrm{Hol}(N) =\lambda(N)\rtimes \Aut(N)= \rho(N)\rtimes \Aut(N),\]
where $\lambda$ and $\rho$ denote the left and right regular representations
\[ \lambda(\eta) = ( x\mapsto \eta x), \,\ \rho(\eta) = (x\mapsto x\eta^{-1})\,\ \mbox{ for } \eta,x\in N,\]
%\[\begin{cases}
%\lambda(\eta):N\longrightarrow N;\hspace{1em}\lambda(\eta)(x) = \eta x&\mbox{(left translation)},\\
%\rho(\eta):N\longrightarrow N;\hspace{1em}\rho(\eta)(x) = x\eta^{-1}&\mbox{(right translation)},\end{cases}\]
and a subgroup $\mathcal{G}$ of $\Hol(N)$ is said to be \emph{regular} if $\mathcal{G}\longrightarrow N;\hspace{1mm}\sigma\mapsto \sigma(1_N)$ is a bijection. Following \cite{biskew}, we shall say that $(G,N)$ is \emph{realizable} when any of the above conditions is satisfied. Let us note that skew braces are ring-like structures introduced to study solutions to the Yang-Baxter equation.

\vspace{1.5mm}

Notice that $\lambda(N),\rho(N)\simeq N$ are regular subgroups of $\Hol(N)$, so the pair $(G,N)$ is realizable when $G\simeq N$. But whether $(G,N)$ is realizable depends upon the groups $G$ and $N$ when $G\not\simeq N$. It is therefore natural to ask which pairs $(G,N)$ are realizable. For example, when $G$ is fixed to be
\begin{itemize}
%\item any group of order $2p(2p+1)$ with $p$ and $2p+1$ primes \cite{Kohl ANT};
%\item any cyclic group of prime power order \cite{Kohl, Byott 2};
\item any group of squarefree order \cite{AB,AB', AB''},
\item any group of order $p^3$ with $p$ a prime \cite{NZ},
\item any non-abelian simple and more generally quasisimple group \cite{Byott simple,Tsang QS},
%\item any quasisimple group \cite{Tsang QS};
%\\ (this generalizes \cite{Byott simple} whch treats non-abelian ismpel gr
\item the symmetric group $S_n$ with $n\geq 5$ \cite{Tsang Sn},
\item the automorphism group of any sporadic simple group \cite{Tsang ASG},
\end{itemize} 
the groups $N$ for which the pair $(G,N)$ is realizable are completely known. There are also other papers (for example \cite{Byott soluble, FCC, Bachiller, Tsang solvable, Nasy}) which investigate necessary relations between $G$ and $N$ in order for $(G,N)$ to be realizable.

\vspace{1.5mm}

Cyclic groups have the simplest structure of all groups. It is then natural to ask for which groups $N$ is the pair $(C_n,N)$ realizable for the cyclic group $C_n$ of order $n$. The purpose of this paper is to characterize all such $N$. % results that are already known when $G$ is a cyclic group.

\vspace{1.5mm}

Let us first recall some known results. For $n$ an odd prime power, we have:%esults that are already known concerning the realizability of $(C_n,N)$. For $n$ an odd prime power, it is known that:

\begin{prop}\label{p prop1}Let $N$ be a group of order $p^m$ with $p$ an odd prime. Then the pair $(C_{p^m},N)$ is realizable if and only if $N\simeq C_{p^m}$.
\end{prop}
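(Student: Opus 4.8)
The reverse implication is immediate: if $N\simeq C_{p^m}$, then $\lambda(N)$ is a regular subgroup of $\Hol(N)$ isomorphic to $C_{p^m}$, so $(C_{p^m},N)$ is realizable. The content lies in the forward implication, which I would prove in the contrapositive form: \emph{if $p$ is odd and $\Hol(N)$ contains a regular cyclic subgroup of order $p^m=|N|$, then $N$ is cyclic.} So suppose $\mathcal G=\langle\sigma\rangle$ is regular and cyclic of order $p^m$, and write $\sigma=\rho(a)\,\alpha$ with $a\in N$ and $\alpha\in\Aut(N)$; since $\sigma$ has $p$-power order, so does $\alpha$. The plan is to pass to the Frattini quotient $\overline N=N/\Phi(N)$, which is elementary abelian, say $\overline N\cong\mathbb F_p^{\,d}$ with $d$ the minimal number of generators of $N$. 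Because $\Phi(N)$ is characteristic, $\sigma$ descends to an affine map $\overline\sigma(v)=Av-\overline a$ on $\overline N$, where $A\in\mathrm{GL}_d(\mathbb F_p)$ is induced by $\alpha$ and is therefore unipotent (a $p$-element of $\mathrm{GL}_d(\mathbb F_p)$). Regularity makes $\langle\sigma\rangle$ transitive on $N$, hence $\langle\overline\sigma\rangle$ is transitive on the $p^d$ points of $\overline N$, so $p^d$ divides the order of $\overline\sigma$. I want to force $d=1$, after which the Burnside basis theorem shows that $N$ is generated by a single element, i.e.\ $N$ is cyclic.

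The heart of the argument is an order estimate for $\overline\sigma$. Writing $A=I+\nu$ with $\nu$ nilpotent of index $\ell\le d$, one has $(I+\nu)^{p^c}=I+\nu^{p^c}$ in characteristic $p$, so the order of $A$ is $p^c$ with $p^{c-1}<\ell\le p^c$. Iterating and summing the resulting geometric-type series, I would compute $\overline\sigma^{\,p^c}(v)=v-\bigl(\sum_{j=0}^{p^c-1}A^{j}\bigr)\overline a$ and evaluate $\sum_{j=0}^{p^c-1}A^{j}=\sum_{i\ge0}\binom{p^c}{i+1}\nu^{i}=\nu^{\,p^c-1}$, using $\binom{p^c}{k}\equiv0\pmod p$ for $0<k<p^c$. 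Thus $\overline\sigma^{\,p^c}$ is a translation of order dividing $p$, whence $\mathrm{ord}(\overline\sigma)$ is either $p^c$ (when $\nu^{p^c-1}\overline a=0$) or $p^{c+1}$ (when $\nu^{p^c-1}\overline a\neq0$, which requires $\ell=p^c$).

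Combining this with the transitivity constraint $p^d\mid\mathrm{ord}(\overline\sigma)$ and the bounds $p^{c-1}<\ell\le d$ produces a numerical incompatibility precisely when $p$ is odd. Indeed, if $\mathrm{ord}(\overline\sigma)=p^c$ then $d\le c$, so $p^{c-1}<\ell\le d\le c$, i.e.\ $p^{c-1}<c$; and if $\mathrm{ord}(\overline\sigma)=p^{c+1}$ then $\ell=p^c\le d\le c+1$, i.e.\ $p^c\le c+1$. For $p\ge3$ neither inequality has a solution with $c\ge1$, so $c=0$, meaning $A=I$ and $\overline\sigma$ is a translation of order at most $p$; transitivity then forces $d\le1$, hence $d=1$ and $N$ is cyclic. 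I regard this last numerical step as the main obstacle and as the exact point where oddness of $p$ is indispensable: for $p=2$ the inequality $p^c\le c+1$ is solved by $c=1$, matching the fact that $\Hol(C_2\times C_2)\cong S_4$ genuinely does contain a regular copy of $C_4$. The remaining work is just to set up the descent to $\overline N$ cleanly and to verify the binomial vanishing and the two order cases, all of which are routine once the framework above is in place.
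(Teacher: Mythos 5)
Your proof is correct, and it is necessarily a different route from the paper's, because the paper offers no argument for this proposition at all: it simply cites \cite{Kohl} and \cite{Tsang HG}. Your argument is a legitimate self-contained substitute. The descent is sound ($\Phi(N)$ is characteristic, so $\alpha$ and the permutation $\sigma=\rho(a)\alpha$ both induce maps on $\overline N=N/\Phi(N)$, the latter affine), transitivity of $\langle\overline\sigma\rangle$ on the $p^d$ points of $\overline N$ follows from regularity of $\langle\sigma\rangle$ plus equivariance of the quotient map, and your two key computations over $\mathbb F_p$ --- $(I+\nu)^{p^c}=I+\nu^{p^c}$ and $\sum_{j=0}^{p^c-1}(I+\nu)^j=\nu^{p^c-1}$ --- are right, so indeed $\mathrm{ord}(\overline\sigma)\in\{p^c,p^{c+1}\}$, with the larger value forcing $\ell=p^c$. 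The resulting inequalities $p^{c-1}<c$ and $p^c\le c+1$ have no solutions with $c\ge1$ once $p\ge3$, which gives $A=I$, then $d\le 1$ by transitivity, and $N$ cyclic by the Burnside basis theorem. Comparing the two: the paper's citation is brief and the cited results carry more information (Kohl's theorem also counts the Hopf-Galois structures, and \cite{Tsang HG} proceeds via the bijective-crossed-homomorphism and characteristic-subgroup machinery that the present paper reviews in Section 2 and reuses elsewhere), whereas your proof is elementary and transparent about exactly where oddness of $p$ enters --- the failure of $p^c\le c+1$ for $c\ge1$ --- and it correctly predicts the exceptional behaviour at $p=2$ recorded in Proposition \ref{p prop2}, e.g.\ the regular copy of $C_4$ inside $\Hol(C_2\times C_2)\simeq S_4$ coming from the admissible solution $c=1$, $\ell=d=2$.
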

\begin{proof}See \cite[Theorem 4.5]{Kohl} or alternatively \cite[Theorem 1.5]{Tsang HG}.
\end{proof}

For $n$ a power of $2$, the situation is different but has been solved. To state the result, we need some notation. For $m\geq 2$, write
\begin{equation}\label{D} D_{2^m}  = \langle r,s \mid r^{2^{m-1}} = 1,\, s^2 = 1,\, srs^{-1} = r^{-1}\rangle
\end{equation}
for the dihedral group of order $2^m$, and note that $D_4$ is the Klein four-group. For $m\geq 3$, similarly write
\begin{equation}\label{Q}
Q_{2^m}  = \langle r,s\mid r^{2^{m-1}} = 1,\, s^2 =r^{2^{m-2}},\, srs^{-1} = r^{-1}\rangle
\end{equation}
for the generalized quaternion group of order $2^m$. It is known that:

\begin{prop}\label{p prop2}Let $N$ be a group of order $2^m$. Then:
\begin{enumerate}[(a)]
\item For $m\leq 2$, the pair $(C_{2^m},N)$ is always realizable.
\item For $m\geq 3$ the pair $(C_{2^m},N)$ is realizable if and only if $N\simeq  C_{2^m},D_{2^m},Q_{2^m}$.
\end{enumerate}
\end{prop}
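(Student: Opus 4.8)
The plan is to work throughout with the third formulation in the introduction: by the stated equivalence, $(C_{2^m},N)$ is realizable if and only if $\Hol(N)$ contains a regular cyclic subgroup of order $2^m$. Since $|N|=2^m$, such a subgroup is generated by a single element $\sigma\in\Hol(N)$ whose induced permutation of $N$ is one $2^m$-cycle, equivalently whose orbit of $1_N$ is all of $N$. Writing $\sigma=\lambda(\eta)\phi$ with $\eta=\sigma(1_N)\in N$ and $\phi\in\Aut(N)$, one has $\sigma(x)=\eta\,\phi(x)$ and hence
\[ \sigma^k(1_N)=\eta\,\phi(\eta)\,\phi^2(\eta)\cdots\phi^{k-1}(\eta), \qquad \sigma^{2^m}=1 \iff \phi^{2^m}=\mathrm{id}\ \text{and}\ \sigma^{2^m}(1_N)=1_N. \]
Regularity is then the assertion that the $2^m$ partial products above, for $0\le k<2^m$, are pairwise distinct. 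All parts of the proposition will be analyzed through the pair $(\eta,\phi)$.

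For part $(a)$ the verification is direct: for $m\le 1$ the group $N$ is forced to be isomorphic to $C_{2^m}$, and for $m=2$ one has $N\simeq C_4$ (where $N\simeq C_{2^m}$) or $N\simeq C_2\times C_2$, in which case $\Hol(C_2\times C_2)\simeq S_4$ visibly contains a regular $4$-cycle. For the ``if'' direction of part $(b)$, the case $N\simeq C_{2^m}$ is trivial since $\lambda(N)$ is itself a regular cyclic subgroup. For $N\simeq D_{2^m}$ I would exhibit $\sigma=\lambda(s)\phi$, where $\phi\in\Aut(D_{2^m})$ is the order-$2^{m-1}$ automorphism fixing $r$ and sending $s\mapsto rs$; computing the partial products $\eta_k=s\,\phi(s)\cdots\phi^{k-1}(s)$ shows $\sigma$ is a single $2^m$-cycle (one checks this directly for $m=3$ and verifies the pattern persists). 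For $N\simeq Q_{2^m}$ the analogous $\phi$ fixing $r$ with $s\mapsto rs$ and the same $\sigma=\lambda(s)\phi$ works by identical bookkeeping. This settles realizability for the three listed groups.

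The substance of the proposition is the ``only if'' direction of $(b)$: if $\Hol(N)$ has a regular cyclic subgroup $\langle\sigma\rangle$ of order $2^m$, then $N\simeq C_{2^m},D_{2^m}$ or $Q_{2^m}$. I would approach this through the unique involution $\tau=\sigma^{2^{m-1}}$ of $\langle\sigma\rangle$: being a nontrivial power of a regular element, $\langle\tau\rangle$ acts semiregularly, so $\tau$ is a fixed-point-free involution commuting with $\sigma$. Writing $\tau=\lambda(\zeta)\psi$ forces $\psi^2=\mathrm{id}$, $\psi(\zeta)=\zeta^{-1}$, and $\zeta\,\psi(x)\ne x$ for all $x$; combined with $\psi=\phi^{2^{m-1}}$ and the requirement that $\phi$ have $2$-power order, this tightly constrains $(N,\phi)$. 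The goal is to push these constraints to show first that $N$ possesses a cyclic subgroup of index at most $2$ --- reducing $N$ to one of $C_{2^m}$, $C_{2^{m-1}}\times C_2$, $D_{2^m}$, $Q_{2^m}$, the semidihedral group, or the modular maximal-cyclic group --- and then to eliminate the last three by showing their holomorphs admit no element that is a single $2^m$-cycle.

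I expect the main obstacle to lie precisely in establishing that $N$ has a cyclic subgroup of index at most $2$, since the naive projection $\Hol(N)\to\Aut(N)$ only yields a cyclic subgroup of $N$ of order $2^{m-d}$, where $2^d=|\phi|$, which is too small when $\phi$ has large order. A workable route is to analyze the $\langle\phi\rangle$-orbit of $\eta$ inside $N$ together with the fixed-point-free condition on $\tau$, extracting enough relations among $\eta,\phi(\eta),\phi^2(\eta),\dots$ to bound the number of generators and the exponent of $N$; the elimination of the semidihedral and modular cases, and of $C_{2^{m-1}}\times C_2$, should then follow from a finite computation in each $\Aut$, for instance by checking that every order-$2^m$ element of the corresponding holomorph necessarily fixes some point of $N$. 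Assembling these steps with the constructions above completes the characterization.
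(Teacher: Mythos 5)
The paper does not prove this proposition from scratch: it cites Byott's work, namely \cite{By96} for part (a) and \cite{Byott 2} for part (b). Your constructions for the easy halves are correct and verifiable: part (a) is fine, and for the ``if'' direction of (b) your $\sigma=\lambda(s)\phi$ with $\phi(r)=r$, $\phi(s)=rs$ does work in both $D_{2^m}$ and $Q_{2^m}$ (the partial products come out as $\eta_{2i}=r^{-i}$, $\eta_{2i+1}=r^is$ in the dihedral case, and a similar alternating pattern in the quaternion case, exhausting $N$), though your phrase ``verifies the pattern persists'' should be replaced by the short induction that actually establishes this for all $m$.

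The genuine gap is the ``only if'' direction of (b), which you yourself identify as ``the substance of the proposition'' and then do not prove. Everything after introducing the fixed-point-free involution $\tau=\sigma^{2^{m-1}}$ is a plan, not an argument: you say the constraints on $(\zeta,\psi)$ ``tightly constrain $(N,\phi)$'' and that a ``workable route'' \emph{should} yield that $N$ has a cyclic subgroup of index at most $2$, but no relations are actually extracted and no bound on the structure of $N$ is derived. This step is precisely the hard content of the proposition --- in the literature it is the main theorem of Byott's paper \cite{Byott 2}, which the present paper cites rather than reproves --- so it cannot be waved through. Moreover, your proposed elimination of the semidihedral, modular maximal-cyclic, and $C_{2^{m-1}}\times C_2$ cases ``by a finite computation in each $\Aut$'' is not finite in the relevant sense: these are infinite families indexed by $m$, so one needs a uniform argument (e.g.\ an analysis of $\Aut$ of each family valid for all $m$) showing no element of the holomorph is a $2^m$-cycle; for a single fixed $m$ a machine check would suffice, but the proposition quantifies over all $m\geq 3$. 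As it stands, the proposal establishes part (a) and the realizability of the three listed types, but not the characterization.
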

\begin{proof}See \cite[Lemma 2]{By96} for (a) and  \cite[Corollary 5.3, Theorem 6.1]{Byott 2} for (b).
\end{proof}

By \cite[Theorem 1]{Byott nilpotent}, with Propositions \ref{p prop1} and \ref{p prop2}, we get a complete characterization of nilpotent groups $N$ for which $(C_n,N)$ is realizable. We remark that the exact number of Hopf-Galois structures of nilpotent type $N$ on any Galois $C_n$-extension is given in \cite[Theorem 2]{Byott nilpotent}. But as the next proposition shows, the pair $(C_n,N)$ can also be realizable for non-nilpotent groups $N$.

\vspace{1.5mm}

A finite group is called a \emph{$C$-group} (or \emph{$Z$-group}) if all of its Sylow subgroups are cyclic. The terminology comes from \cite{MM}, where a very nice description of $C$-groups was given. By \cite[Lemma 3.5]{MM}, every $C$-group is presented as
\[\label{C group}
C(e,d,k) =  \langle x,y \mid x^e = 1,\, y^d = 1,\, yxy^{-1} = x^{k}\rangle\]
%and letting $\mathrm{ord}_e(k)$ denote the multiplicative order of $k$ mod $e$,  which clearly has to divide $d$, one may further assume that
%\begin{equation}\label{k ass}
%\mbox{$\mathrm{ord}_e(k)$ is divisible by all prime factors of $d$.}
%\end{equation}
for $\gcd(e,d) = \gcd(e,k) =1$, and the order of $k$ in $(\mathbb{Z}/e\mathbb{Z})^\times$ divides $d$. Then, it is essentially known by work in the literature that:

\begin{prop}\label{C prop}For any $C$-group $N$ of order $n$, the pair $(C_n,N)$ is realizable.
%Let $N$ be a $C$-group of order $n$. Then the pair $(C_n,N)$ is always realizable.
\end{prop}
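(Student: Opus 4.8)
The plan is to exhibit an explicit regular subgroup of $\Hol(N)$ isomorphic to $C_n$, which by the equivalence of condition $(c)$ with realizability recalled in the introduction establishes the claim. Write $N = C(e,d,k) = \langle x,y\rangle$ as in the $C$-group presentation, so that $n = ed$, the subgroup $\langle x\rangle$ is normal of order $e$, the image of $y$ generates the quotient of order $d$, and every element of $N$ is uniquely of the form $x^i y^j$ with $0\leq i < e$ and $0\leq j < d$; in particular $\gcd(e,d)=1$. The key observation is to combine a left translation coming from the normal cyclic part with a right translation coming from the complement, and I would accordingly consider the two elements $\lambda(x)$ and $\rho(y^{-1})$ of $\Hol(N)$.

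Next I would verify that $\mathcal{G} := \langle \lambda(x), \rho(y^{-1})\rangle$ is a regular subgroup isomorphic to $C_n$. Since $\lambda$ and $\rho$ are faithful, $\lambda(x)$ has order $e$ and $\rho(y^{-1})$ has order $d$, and these two elements commute because left and right translations always commute in $\Hol(N)$. As $\gcd(e,d)=1$, any element of $\langle\lambda(x)\rangle \cap \langle\rho(y^{-1})\rangle$ has order dividing both $e$ and $d$, hence is trivial; therefore $\mathcal{G} = \langle\lambda(x)\rangle \times \langle\rho(y^{-1})\rangle \simeq C_e\times C_d \simeq C_n$, and in particular $|\mathcal{G}| = ed = n$. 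For regularity I would compute the orbit of $1_N$: since the two generators commute, $\lambda(x)^i\rho(y^{-1})^j(1_N) = \lambda(x)^i(y^j) = x^i y^j$, so this orbit is all of $N$. Thus $\mathcal{G}$ acts transitively on $N$, and a subgroup of $\Hol(N)$ of order $|N|$ acting transitively is automatically regular. This produces the desired regular copy of $C_n$ in $\Hol(N)$.

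There is no serious obstacle in this argument; the only real content is choosing the right pair of elements. The decisive structural input is the coprimality $\gcd(e,d)=1$ supplied by the $C$-group presentation: it is what forces the two commuting translations of coprime orders $e$ and $d$ to generate a cyclic group of order $n$ rather than merely an abelian one, and it simultaneously guarantees the normal form $x^i y^j$ used in the transitivity computation. In fact the same construction realizes $(C_n,N)$ for any split metacyclic group $N = C_e\rtimes C_d$ with $\gcd(e,d)=1$; the role of the hypothesis that $N$ be a $C$-group is precisely to supply, via the description of $C$-groups recalled before the statement, such a presentation with $n=ed$.
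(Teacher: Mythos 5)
Your proof is correct and is essentially the paper's own argument: the subgroup $\langle\lambda(x),\rho(y^{-1})\rangle$ you construct is exactly the regular subgroup $\{\rho(h(\sigma))\lambda(f(\sigma)):\sigma\in C_e\times C_d\}$ that the paper obtains in Proposition \ref{C prop} by applying Propositions \ref{fpf prop} and \ref{product fpf} to the fixed point free pair $f(\eta_1,\eta_2)=\eta_1$, $h(\eta_1,\eta_2)=\eta_2$ coming from the decomposition $N=\langle x\rangle\langle y\rangle$. The only difference is packaging: you verify regularity directly by computing the orbit of $1_N$ and counting, whereas the paper invokes the general fixed-point-free-pair machinery of Byott--Childs.
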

\begin{proof}Since $N$ is a $C$-group, by the above $N\simeq C_e\rtimes C_d$ with $\gcd(e,d)=1$. Then, it is known and we shall also explain in Proposition \ref{product fpf} that the pair $(C_e\times C_d,N)$ is realizable. But $C_{n} \simeq C_e\times C_d$ since $n = ed$ with $\gcd(e,d)=1$, and the claim now follows.\end{proof}

For $n$ squarefree, every group $N$ of order $n$ is a $C$-group so the pair $(C_n,N)$ is always realizable. In fact, the number of Hopf-Galois structures of type $N$ on any Galois $C_n$-extension has been determined in terms of the orders of the center and commutator subgroup of $N$ (see \cite{AB}). Similarly for the number of skew braces with additive group $N$ and multiplicative group $C_n$ (see \cite{AB''}).

\vspace{1.5mm}

For $n$ arbitrary, however, not every group $N$ of order $n$ is a $C$-group and it is certainly possible that $(C_n,N)$ is realizable for a non-$C$-group $N$ because of Proposition \ref{p prop2}. The only known general restriction on $N$ so far is:

\begin{prop}\label{cyclic prop}Let $N$ be a group of order $n$ such that the pair $(C_n,N)$ is realizable. Then $N$ is both supersolvable and metabelian.
\end{prop}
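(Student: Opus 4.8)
The plan is to place the whole situation inside a single auxiliary subgroup of $\Hol(N)$ and then invoke two classical theorems on factorized groups, avoiding any induction on $|N|$. By the equivalence with condition $(c)$, realizability of $(C_n,N)$ means there is a regular subgroup $\mathcal{G}\le\Hol(N)$ with $\mathcal{G}\simeq C_n$. Writing $\Hol(N)=\lambda(N)\rtimes\Aut(N)$, I would let $\pi\colon\Hol(N)\to\Aut(N)$ be the projection with kernel $\lambda(N)$. Since $\mathcal{G}$ is cyclic, its image $\Lambda:=\pi(\mathcal{G})\le\Aut(N)$ is cyclic, say of order $d$, and I would set $P:=\mathcal{G}\,\lambda(N)$, which is a subgroup of $\Hol(N)$ because $\lambda(N)$ is normal.

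The key step is to show that $P$ is the product of the two cyclic subgroups $\mathcal{G}$ and $\Lambda$. First, $\mathcal{G}\cap\lambda(N)=\ker(\pi|_{\mathcal{G}})$ has order $n/d$, so $|P|=|\mathcal{G}|\,|\lambda(N)|/|\mathcal{G}\cap\lambda(N)|=nd$. Next I would use regularity: under the action $\sigma\mapsto\sigma(1_N)$, the point stabilizer of $1_N$ in $\Hol(N)$ is exactly $\Aut(N)$, and a regular subgroup meets any point stabilizer trivially, so $\mathcal{G}\cap\Aut(N)=1$; as $\Lambda\le\Aut(N)$ this gives $\mathcal{G}\cap\Lambda=1$. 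Since each $\pi(\sigma^k)$ differs from $\sigma^k\in\mathcal{G}\le P$ by an element of $\lambda(N)\le P$, we have $\Lambda\le P$, whence $\mathcal{G}\Lambda\subseteq P$ with $|\mathcal{G}\Lambda|=|\mathcal{G}|\,|\Lambda|/|\mathcal{G}\cap\Lambda|=nd=|P|$. Therefore $P=\mathcal{G}\Lambda$ is a factorization of $P$ into two cyclic subgroups.

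With this factorization in hand, I would appeal to the classical structure theory of factorized groups. By It\^o's theorem, a finite group that is a product of two abelian subgroups is metabelian; since $\mathcal{G}$ and $\Lambda$ are cyclic, $P$ is metabelian. By Huppert's theorem that a finite group which is a product of two cyclic subgroups is supersolvable, $P$ is supersolvable. Finally $N\simeq\lambda(N)$ is a subgroup of $P$, and both supersolvability and metabelianness pass to subgroups, so $N$ is supersolvable and metabelian, as required.

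The substantive point is entirely the middle paragraph: recognizing that the cyclicity of the regular subgroup $\mathcal{G}$, together with the cyclicity of its projection $\Lambda$ and the triviality of $\mathcal{G}\cap\Aut(N)$ forced by regularity, repackages the hypothesis as a group that is the product of two cyclic subgroups. Once that is established, It\^o's and Huppert's theorems finish the argument immediately. The only delicate bookkeeping is checking that $\Lambda$ genuinely lies in $P$ and that the two order counts coincide, which is exactly where regularity enters; I would expect this, rather than any group-theoretic subtlety about $N$ itself, to be the main thing to get right.
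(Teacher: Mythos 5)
Your proof is correct: the order counts, the fact that regularity forces $\mathcal{G}\cap\Aut(N)=1$, the containment $\Lambda\le P$, and the resulting factorization $P=\mathcal{G}\lambda(N)=\mathcal{G}\Lambda$ all check out, and It\^o's theorem (product of two abelian subgroups is metabelian) and Huppert's theorem (product of two cyclic subgroups is supersolvable) apply exactly as you use them, with both properties passing to the subgroup $\lambda(N)\simeq N$. The paper itself offers no argument beyond citing \cite{Tsang solvable}, and your argument---repackaging the cyclic regular subgroup into a factorization of $\lambda(N)\mathcal{G}$ as a product of two cyclic subgroups and then invoking It\^o and Huppert---is essentially the proof given in that reference, so this is a faithful reconstruction of the same approach rather than a genuinely different route.
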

\begin{proof}See \cite[Theorem 1.3(a),(b)]{Tsang solvable}.
\end{proof}

Unfortunately, the converse of Proposition \ref{cyclic prop} is false. For example, using \textsc{Magma} \cite{magma}, one can check that the group $N=\textsc{SmallGroup}(84,8)$ is both supersolvable and metabelian, yet the pair $(C_{84},N)$ is not realizable. %Or see Remark \ref{pre remark} for a proof.

\vspace{1.5mm}

In this paper, by building upon the four propositions above, we shall give a complete characterization of the groups $N$ of order $n$ for which $(C_n,N)$ is realizable, without imposing any assumption on $n$ or $N$. By Proposition \ref{C prop}, it suffices to consider non-$C$-groups $N$. Our main theorem is:

\begin{thm}\label{main thm}Let $N$ be a non-$C$-group of order $n$. Then, the pair $(C_n,N)$ is realizable if and only if $N\simeq M\rtimes_\alpha P$ for some $C$-group $M$ of odd order and $(P,\alpha)$ satisfying one of the following conditions:
\begin{enumerate}[$(1)$]
\item $P= D_4$ or $P= Q_8$, and $\alpha(P)$ has order $1$ or $2$;
\item $P= D_{2^m}$ with $m\geq 3$ or $P= Q_{2^m}$ with $m\geq 4$, and $\alpha(r)=\mathrm{Id}_M$.
\end{enumerate}
Here $\alpha : P \longrightarrow\Aut(M)$ is the homomorphism which defines the semidirect product, and $r$ is the element of $P$ in the presentation (\ref{D}) or (\ref{Q}).
\end{thm}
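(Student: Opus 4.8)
The plan is to prove both directions by reducing realizability to smaller groups via two transfer principles and then controlling the action $\alpha$. The first principle is a \emph{quotient-transfer}: if $L\trianglelefteq N$ is characteristic and $\langle\sigma\rangle\simeq C_n$ is regular in $\Hol(N)$, then the functorial map $\Hol(N)\to\Hol(N/L)$ sends $\langle\sigma\rangle$ to a cyclic transitive subgroup of $\Hol(N/L)$; because $\langle\sigma\rangle$ is \emph{abelian}, all point stabilizers in its action on the quotient $N$-set $N/L$ coincide, so the image is in fact regular of order $n/|L|$, giving that $(C_{n/|L|},N/L)$ is realizable. (This is exactly the point where cyclicity of $G$ is used.) The second principle is a \emph{normal-Hall-transfer}: if $K\trianglelefteq N$ is a normal Hall $\pi$-subgroup with complement $R$ (Schur--Zassenhaus), then the Hall $\pi$-part $\mathcal{G}_\pi$ of $\langle\sigma\rangle$, having $\pi$-power order, must fix the base point of the $\pi'$-set $N/K\simeq R$, hence preserves $K$ and acts regularly on it inside $\Hol(K)$; thus $(C_{|K|},K)$ is realizable.

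For the necessity direction, assume $(C_n,N)$ is realizable with $N$ a non-$C$-group. By Proposition \ref{cyclic prop}, $N$ is supersolvable and therefore an ordered Sylow-tower group with primes in decreasing order; as $2$ is the smallest prime, the product of the odd Sylow subgroups is a normal Hall $2'$-subgroup $M\trianglelefteq N$, and a Sylow $2$-subgroup $P$ is a complement, so $N=M\rtimes_\alpha P$. Applying normal-Hall-transfer to the (characteristic) Sylow subgroup of the largest prime of the odd, supersolvable group $M$, together with Proposition \ref{p prop1}, forces that Sylow subgroup to be cyclic; quotient-transfer by it and induction then show every odd Sylow of $M$ is cyclic, i.e. $M$ is a $C$-group of odd order. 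Quotient-transfer by $M$ gives $(C_{2^m},P)$ realizable, so by Proposition \ref{p prop2} we have $P\simeq C_{2^m},D_{2^m}$ or $Q_{2^m}$; since $N$ is a non-$C$-group while all odd Sylows are cyclic, $P$ is non-cyclic, leaving exactly the groups in $(1)$ and $(2)$.

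The heart of the matter, and the step I expect to be the main obstacle, is constraining $\alpha$. Here I would again use quotient-transfer by characteristic subgroups $L$ of $M$ (normal in $N$), choosing $M/L$ to be $C_q$ or $C_q\times C_{q'}$ for odd primes $q,q'$, so that the question becomes the realizability of the base-case pairs $(C_{|M/L|\,2^m},\,(M/L)\rtimes_{\bar\alpha}P)$. Purely group-theoretic input goes only part of the way: since $\Aut(C_q)$ is cyclic and $P^{\mathrm{ab}}\simeq C_2\times C_2$, each character of $\alpha$ has $2$-image of order $\leq 2$, but this does \emph{not} by itself forbid $\bar\alpha(r)$ from being an involution, nor (when $M/L=C_q\times C_{q'}$) an image of order $4$. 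The genuine restriction must be extracted from realizability, and I would obtain it by a direct analysis of a regular cyclic $\langle\sigma\rangle=\langle\rho(\eta)\psi\rangle$ in $\Hol((M/L)\rtimes_{\bar\alpha}P)$, using the orbit formula $\sigma^i(1_N)=\psi^{i-1}(\eta^{-1})\cdots\psi(\eta^{-1})\eta^{-1}$ and the requirement that a single generator of order $n$ reproduce simultaneously the prescribed $2$-part structure on $P$ (governed by Proposition \ref{p prop2}) and the action on $M/L$. This should rule out exactly the forbidden configurations, namely $\alpha(P)$ of order $4$ when $P=D_4$ or $Q_8$, and $\alpha(r)\neq\mathrm{Id}_M$ when $P=D_{2^m}$ with $m\geq 3$ or $Q_{2^m}$ with $m\geq 4$. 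The asymmetry between $r$ and $s$ — and the reason $Q_8$ falls under $(1)$ rather than $(2)$ — should fall out of this computation, since for $m\geq 3$ (resp. $m\geq 4$) the subgroup $\langle r\rangle$ is the characteristic cyclic maximal subgroup of large order, whereas in $Q_8$ the three cyclic maximal subgroups are permuted by $\Aut(Q_8)$ so that only the symmetric condition $|\alpha(P)|\leq 2$ survives.

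For the sufficiency direction I would construct an explicit regular cyclic subgroup by gluing the pieces supplied by Propositions \ref{p prop2} and \ref{C prop}, in the spirit of the product construction of Proposition \ref{product fpf}. When $\alpha(r)=\mathrm{Id}_M$, the subgroup $L_0=M\times\langle r\rangle$ is a normal $C$-group of index $2$ in $N$, and $N=L_0\rtimes_\gamma\langle s\rangle$ where the involution $\gamma$ acts as $\alpha(s)$ on $M$ and by inversion on $\langle r\rangle$; since $\gamma$ inverts the cyclic $2$-part $\langle r\rangle$ of order $\geq 4$ and acts as an involution on the odd $C$-group $M$, this matches precisely the $D_{2^m}/Q_{2^m}$ pattern realized in Proposition \ref{p prop2} on the $2$-part and keeps $M\rtimes_{\alpha(s)}\langle s\rangle$ a $C$-group on the odd part, so the two regular cyclic factors of coprime orders $2^m$ and $|M|$ can be combined into a single regular $\langle\sigma\rangle\simeq C_n$ in $\Hol(N)$; the case $P=D_4,Q_8$ with $|\alpha(P)|\leq 2$ is handled in the same way. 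The only real work here is to verify, again via the orbit formula and the coprimality of the two parts, that the glued element has order exactly $n$ and acts regularly on $N$. It is worth noting that this construction fails precisely when $\gamma$ acts \emph{trivially} on the cyclic $2$-part (forcing $P$ to be $D_4$ with $\alpha(P)$ of order $4$), which is exactly the non-realizable configuration exhibited by $\textsc{SmallGroup}(84,8)$; thus the construction and the obstruction above dovetail to give the stated characterization.
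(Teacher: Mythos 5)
Your two ``transfer principles'' are correct --- they are exactly Proposition \ref{char prop}(a),(b) rewritten in regular-subgroup language (the key facts being that a transitive abelian permutation group is regular, and that normal Hall subgroups are characteristic) --- and with them your reduction to $N\simeq M\rtimes_\alpha P$ with $M$ an odd $C$-group and $P$ dihedral or generalized quaternion correctly reproduces Theorem \ref{pre thm}. But beyond that point, both halves of your argument are placeholders rather than proofs, and the theorem's actual content lies precisely there. For necessity, your plan is to quotient down to $M/L\simeq C_q$ or $C_q\times C_{q'}$ and then do ``a direct analysis'' of a regular cyclic subgroup which ``should rule out exactly the forbidden configurations.'' This renames the problem rather than solving it: the base cases you reach still require exactly the analysis you have deferred, and nothing in your sketch explains why $\alpha(r)\neq\mathrm{Id}_M$ obstructs realizability while $\alpha(s)\neq\mathrm{Id}_M$ does not. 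The paper settles this structurally: it passes to the characteristic subgroup $M_0=M\rtimes_\alpha P'$, observes that $N/M_0\simeq D_4$ forces the induced action of $C_n/H$ on $N/M_0$ to be nontrivial (a bijective crossed homomorphism with trivial action would be an isomorphism from a cyclic group onto the Klein four-group), and then proves in Proposition \ref{P' prop} --- which rests on the determination of $\Aut(M)$ in Proposition \ref{Aut prop} and on Lemmas \ref{alpha lemma} and \ref{order 2 lem} --- that in each forbidden configuration \emph{every} automorphism of $N$ acts trivially on $P/P'$, a contradiction. An argument of this depth, or an equivalent substitute, is missing from your proposal; your heuristic about $\Aut(Q_8)$ permuting the maximal cyclic subgroups gestures at the right phenomenon (cf.\ Proposition \ref{DQ prop}) but proves nothing.

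The sufficiency half contains a concrete error in addition to the vagueness. For $P=Q_{2^m}$ the extension $1\to L_0\to N\to C_2\to 1$ with $L_0=M\times\langle r\rangle$ does \emph{not} split: when $\alpha_r=\mathrm{Id}_M$, any element outside $L_0$ satisfies $(\eta r^is)^2=\eta\,\alpha_s(\eta)\,s^2$ with $s^2=r^{2^{m-2}}\neq 1$, so no element outside $L_0$ has order $2$ and ``$N=L_0\rtimes_\gamma\langle s\rangle$'' is false in half the cases. More fundamentally, your gluing step --- combining a regular cyclic subgroup of $\Hol(M)$ and one of $\Hol(P)$ into a regular cyclic subgroup of $\Hol(N)$ --- is not a valid principle when $\alpha$ is nontrivial: $\Hol(M)\times\Hol(P)$ sits inside $\Hol(N)$ only for $N=M\times P$, and Proposition \ref{no fpf prop} shows that no fixed-point-free-pair style shortcut can exist here, which is exactly why the paper resorts to a bare-hands construction. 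The paper first normalizes $\alpha$ (Proposition \ref{iso prop}, using the exceptional automorphisms of $D_4$ and $Q_8$; this is where the case split between conditions $(1)$ and $(2)$ is genuinely resolved), then exhibits an explicit $\xi\in\Aut(N)$ with $\xi(\eta)=(\alpha_s\phi_k^{-1})(\eta)$, $\xi(r)=r^{-1}$, $\xi(s)=rs$ and the element $\eta_0=xyrs$, and proves the product formula (\ref{product}) by induction to conclude that $\langle\rho(\eta_0)\xi\rangle$ is regular of order $n$ (Proposition \ref{exists prop}). That computation is the entire content of sufficiency and is absent from your proposal. Finally, your closing claim that the construction fails exactly when the $2$-part is acted on trivially, ``forcing $P$ to be $D_4$ with $\alpha(P)$ of order $4$,'' is inconsistent with the theorem itself: $Q_8$ with $\alpha(P)$ of order $4$ is also non-realizable, yet there $s$ inverts $\langle r\rangle$.
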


\begin{cor}Let $N$ be a group of order $n$ with $n$ not divisible by $4$. Then the pair $(C_n,N)$ is realizable precisely when $N$ is a $C$-group.
\end{cor}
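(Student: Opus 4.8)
The plan is to deduce the corollary directly from Proposition~\ref{C prop} and Theorem~\ref{main thm}, the only real content being a divisibility bookkeeping on $|N|$. First, I would dispose of the ``if'' direction, which needs no hypothesis on $n$: if $N$ is a $C$-group of order $n$, then Proposition~\ref{C prop} already guarantees that $(C_n,N)$ is realizable. So this half is immediate, and the condition $4\nmid n$ plays no role in it.

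For the ``only if'' direction I would argue by contraposition. Suppose $(C_n,N)$ is realizable but $N$ is \emph{not} a $C$-group. Then Theorem~\ref{main thm} applies and gives $N\simeq M\rtimes_\alpha P$ for some $C$-group $M$ of odd order together with a pair $(P,\alpha)$ of type $(1)$ or $(2)$. The crucial observation is that in every admissible case the $2$-group $P$ has order divisible by $4$: in case $(1)$ we have $P=D_4$ or $P=Q_8$, of orders $4$ and $8$; in case $(2)$ we have $P=D_{2^m}$ with $m\geq 3$ or $P=Q_{2^m}$ with $m\geq 4$, each of order $2^m\geq 8$. Since $P$ occurs as the complement in the semidirect product, $|P|$ divides $|N|=n$, whence $4\mid n$.

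Putting the two pieces together: if a realizable $N$ fails to be a $C$-group then necessarily $4\mid n$, so under the standing hypothesis $4\nmid n$ every realizable $N$ must be a $C$-group. Combined with the first paragraph, this yields the stated equivalence. I do not expect any genuine obstacle here, as the whole argument is a one-line consequence of the classification in Theorem~\ref{main thm} once one records that every admissible factor $P$ appearing there has order a multiple of $4$; the only thing to be careful about is to check this divisibility uniformly across all four families of $P$.
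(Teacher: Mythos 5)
Your proposal is correct and follows essentially the same route as the paper: the backward direction is Proposition~\ref{C prop}, and the forward direction applies Theorem~\ref{main thm} to a realizable non-$C$-group and observes that the complement $P$ is a $2$-group of order at least $4$, forcing $4\mid n$. The only difference is that you spell out the case check on $P$ explicitly, which the paper compresses into one sentence.
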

\begin{proof}The forward implication holds by Theorem \ref{main thm} because there $M$ is a group of odd order while $P$ is a $2$-group of order at least $4$. The backward implication is Proposition \ref{C prop}. \end{proof}

\begin{remark}\label{remark Rump}Instead of fixing $G$ to be cyclic, one can also fix $N$ to be cyclic and ask for which groups $G$ is the pair $(G,C_n)$ realizable. This case has been solved completely in \cite[Corollary 1 to Theorem 2]{Rump}, which states that
\[ (G,C_n)\mbox{ is realizable}\iff G\mbox{ is solvable, $2$-nilpotent, almost Sylow-cyclic}.\]
%\[ (G,C_n)\mbox{ is realizable }\iff \begin{array}{l}G\mbox{ is solvable, $2$-nilpotent,}\\\mbox{\hspace{1cm}and almost Sylow-cyclic}.\end{array}\]
Here $G$ being $2$-nilpotent means that it has a normal Hall $2'$-subgroup $M$. By the Schur-Zassenhaus theorem, this simply means that $G = M\rtimes P$, where $P$ denotes any Sylow $2$-subgroup of $G$. The term \emph{almost Sylow-cyclic} means that every Sylow $p$-subgroup is cyclic for odd primes $p$, and any non-trivial Sylow $2$-subgroup contains a cyclic subgroup of index $2$. We then see that the pair $(G,C_n)$ is realizable if and only if $G\simeq M\rtimes_\alpha P$, where
\begin{enumerate}[(a)]
\item $M$ is any $C$-group of odd order,
\item $P$ is trivial or any $2$-group admitting a cyclic subgroup of index $2$,
\end{enumerate}
and there is no restriction on the homomorphism $\alpha : P\longrightarrow \Aut(M)$. Note that such a group $G$ is  always solvable because $C$-groups are solvable.

\vspace{1.5mm}

Comparing this with Theorem \ref{main thm}, we deduce that realizability of $(C_n,\Gamma)$ implies that of $(\Gamma,C_n)$, but the converse fails to hold for certain values of $n$.\end{remark}

\section{Methods to study realizability}

Let $G$ and $N$ be two finite groups of the same order. Below, we review a couple of techniques that can be used to study the realizability of $(G,N)$.

\subsection{Characteristic subgroups and induction}

To prove that $(G,N)$ is not realizable, one approach is to use \emph{characteristic} subgroups of $N$, namely subgroups $M$ such that $\pi(M) = M$ for all $\pi\in\Aut(N)$. This was developed by the author in \cite[Section 4]{Tsang HG} and was inspired by work of \cite{Byott simple}.

\vspace{1.5mm}

First, recall that given $\ff\in\Hom(G,\Aut(N))$, a map $\fg: G\longrightarrow N$ is said to be a \emph{crossed homomorphism (with respect to $\ff)$} if it satisfies
\begin{equation}\label{g relation} \fg(\sigma\tau) = \fg(\sigma)\cdot \ff(\sigma)(\fg(\tau)) \mbox{ for all }\sigma,\tau\in G.\end{equation}
Let us write $Z_\ff^1(G,N)$ for the set of all such crossed homomorphisms.

\begin{prop}\label{regular lem}The regular subgroups of $\Hol(N)$ isomorphic to $G$ are exactly the subsets of $\Hol(N)$ of the form
\[\{\rho(\fg(\sigma))\cdot\ff(\sigma):\sigma\in G\},\mbox{ where }\begin{cases}
\ff\in\Hom(G,\Aut(N)),\\ \fg\in Z_\ff^1(G,N)\mbox{ is bijective}.
\end{cases}\]
\end{prop}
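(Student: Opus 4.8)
The plan is to exploit the semidirect product decomposition $\Hol(N) = \rho(N)\rtimes\Aut(N)$, which expresses every element of $\Hol(N)$ uniquely as $\rho(\eta)\pi$ with $\eta\in N$ and $\pi\in\Aut(N)$. The one computational fact driving both directions is the multiplication rule, which follows from the conjugation relation $\pi\rho(\eta)\pi^{-1} = \rho(\pi(\eta))$: namely
\[(\rho(\eta_1)\pi_1)(\rho(\eta_2)\pi_2) = \rho\bigl(\eta_1\cdot\pi_1(\eta_2)\bigr)\,\pi_1\pi_2.\]
I would establish this identity first, since it is the engine of the argument.

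For the forward direction, given $\ff\in\Hom(G,\Aut(N))$ and a bijective $\fg\in Z_\ff^1(G,N)$, I would apply the multiplication rule to the pair $\rho(\fg(\sigma))\ff(\sigma)$ and $\rho(\fg(\tau))\ff(\tau)$; the crossed-homomorphism relation (\ref{g relation}) together with $\ff$ being a homomorphism collapse the product to $\rho(\fg(\sigma\tau))\ff(\sigma\tau)$. This shows that the set $\mathcal{G}=\{\rho(\fg(\sigma))\ff(\sigma):\sigma\in G\}$ is closed under multiplication, hence a subgroup, and that $\sigma\mapsto\rho(\fg(\sigma))\ff(\sigma)$ is a homomorphism $G\to\Hol(N)$. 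Its injectivity follows from uniqueness of the decomposition and injectivity of $\fg$, so $\mathcal{G}\simeq G$. For regularity I would compute the orbit of $1_N$: since $\rho$ is the right regular representation, $(\rho(\fg(\sigma))\ff(\sigma))(1_N) = \fg(\sigma)^{-1}$, so the orbit map $\mathcal{G}\to N$ is $\rho(\fg(\sigma))\ff(\sigma)\mapsto\fg(\sigma)^{-1}$, a bijection precisely because $\fg$ is.

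For the reverse direction, given a regular subgroup $\mathcal{G}\leq\Hol(N)$ together with an isomorphism $\theta\colon G\to\mathcal{G}$, I would define $\fg(\sigma)\in N$ and $\ff(\sigma)\in\Aut(N)$ by the unique decomposition $\theta(\sigma) = \rho(\fg(\sigma))\ff(\sigma)$. Applying the multiplication rule to $\theta(\sigma\tau) = \theta(\sigma)\theta(\tau)$ and comparing the two factors via uniqueness of the decomposition yields simultaneously that $\ff$ is a homomorphism and that $\fg$ satisfies (\ref{g relation}). Bijectivity of $\fg$ then follows from regularity of $\mathcal{G}$ via the same orbit formula $\theta(\sigma)(1_N) = \fg(\sigma)^{-1}$, and by construction $\{\rho(\fg(\sigma))\ff(\sigma):\sigma\in G\}=\mathcal{G}$.

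The argument is essentially bookkeeping, so I do not anticipate a genuine obstacle; the one point demanding care is the bijectivity/regularity correspondence, where the inversion in $\fg(\sigma)^{-1}$ (arising because $\rho$ is the \emph{right} regular representation) must be tracked so that a bijective crossed homomorphism matches a regular subgroup and not some twisted variant. I would likewise double-check the orientation of the conjugation relation, so that the multiplication rule places $\ff(\sigma)$ acting on $\fg(\tau)$ on the correct side; reversing it would deform (\ref{g relation}) into the wrong cocycle condition.
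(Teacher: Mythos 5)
Your proposal is correct and follows exactly the route the paper indicates: the paper's proof consists of the single remark that the claim ``follows directly from the fact that $\Hol(N) = \rho(N)\rtimes \Aut(N)$'' (deferring details to an earlier paper), and your argument is precisely the spelled-out version of that, with the multiplication rule $(\rho(\eta_1)\pi_1)(\rho(\eta_2)\pi_2) = \rho(\eta_1\cdot\pi_1(\eta_2))\pi_1\pi_2$ correctly matching the cocycle condition (\ref{g relation}) and the orbit formula $\sigma\mapsto\fg(\sigma)^{-1}$ correctly linking bijectivity of $\fg$ to regularity. No gaps.
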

\begin{proof}This follows directly from the fact that $\Hol(N) = \rho(N)\rtimes \Aut(N)$; or see \cite[Proposition 2.1]{Tsang HG} for a proof.
\end{proof}

The next proposition gives us a way to show that $(G,N)$ is not realizable using characteristic subgroups of $N$ and induction.  We remark that (a) was previously known but (b) is new.

\begin{prop}\label{char prop} Let $\ff\in\Hom(G,\Aut(N))$ and let $\fg\in Z_\ff^1(G,N)$ be a bijective crossed homomorphism. Let $M$ be any characteristic subgroup of $N$ and define $H = \fg^{-1}(M)$. Then
\begin{enumerate}[$(a)$]
\item $H$ is a subgroup of $G$ and the pair $(H,M)$ is realizable;
\item $H$ is a normal subgroup of $G$ and the pair $(G/H,N/M)$ is realizable, as long as $H$ lies in the center $Z(G)$ of $G$.\end{enumerate}
\end{prop}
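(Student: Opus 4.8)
The plan is to produce in each case an explicit bijective crossed homomorphism and then invoke Proposition \ref{regular lem}. Throughout I would rely on the two standard consequences of (\ref{g relation}), namely $\fg(1_G) = 1_N$ and $\fg(\sigma^{-1}) = \ff(\sigma^{-1})(\fg(\sigma)^{-1})$, together with the key structural point that because $M$ is characteristic in $N$, every $\pi\in\Aut(N)$ restricts to an automorphism $\pi|_M$ of $M$ and also descends to an automorphism $\bar\pi$ of $N/M$.

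For part $(a)$, I would first check that $H = \fg^{-1}(M)$ is a subgroup. Closure follows from (\ref{g relation}): if $\fg(\sigma),\fg(\tau)\in M$ then $\fg(\sigma\tau) = \fg(\sigma)\cdot\ff(\sigma)(\fg(\tau))\in M$ since $\ff(\sigma)(M)=M$, and closure under inverses follows from the formula for $\fg(\sigma^{-1})$. Realizability of $(H,M)$ then comes almost for free: I would take $\ff':H\longrightarrow\Aut(M)$ given by $\ff'(\sigma)=\ff(\sigma)|_M$ and $\fg'=\fg|_H:H\longrightarrow M$. The restriction $\fg'$ remains a crossed homomorphism with respect to $\ff'$ by the same identity, and it is a bijection onto $M$ because $\fg$ is a bijection onto $N$ carrying $H$ exactly onto $M$. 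This also yields $|H|=|M|$ as a byproduct, which I would record for later use.

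For part $(b)$, since $H\subseteq Z(G)$ it is automatically normal in $G$. I would attempt to define $\bar\ff:G/H\longrightarrow\Aut(N/M)$ by $\bar\ff(\sigma H)=\overline{\ff(\sigma)}$ and $\bar\fg:G/H\longrightarrow N/M$ by $\bar\fg(\sigma H)=\fg(\sigma)M$, and then verify that both are well-defined, that $\bar\fg$ is a crossed homomorphism with respect to $\bar\ff$, and that $\bar\fg$ is bijective. Once $\bar\ff$ is known to be well-defined, the well-definedness of $\bar\fg$, the crossed-homomorphism identity, and bijectivity are all routine: surjectivity of $\bar\fg$ follows from that of $\fg$, and injectivity from the count $|G/H|=|N/M|$, which holds by $|G|=|N|$ and $|H|=|M|$.

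The main obstacle is precisely showing that $\bar\ff$ is well-defined, i.e.\ that $\ff(h)$ acts trivially on $N/M$ for every $h\in H$; this is the only place the hypothesis $H\subseteq Z(G)$ is used. I would exploit centrality by writing $h\sigma=\sigma h$ and expanding both sides via (\ref{g relation}), obtaining $\fg(h)\ff(h)(\fg(\sigma)) = \fg(\sigma)\ff(\sigma)(\fg(h))$. Since $\fg(h)\in M$ and $\ff(\sigma)(\fg(h))\in M$, reducing modulo the normal subgroup $M$ collapses this identity to $\ff(h)(\fg(\sigma))M=\fg(\sigma)M$. As $\fg$ is surjective, $\fg(\sigma)$ ranges over all of $N$, so $\ff(h)$ is trivial on $N/M$, which is exactly what makes $\bar\ff(\sigma H)=\overline{\ff(\sigma)}$ independent of the coset representative.
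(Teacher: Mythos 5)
Your proposal is correct and follows essentially the same route as the paper: part $(a)$ by restricting $\ff$ and $\fg$ to $H$ and $M$, and part $(b)$ by expanding $\fg(\tau\sigma)=\fg(\sigma\tau)$ via (\ref{g relation}), reducing modulo the characteristic (hence normal) subgroup $M$, and using surjectivity of $\fg$ to conclude that $\ff(\tau)$ acts trivially on $N/M$, so that the induced pair $(\overline{\ff}_M,\overline{\fg}_M)$ feeds into Proposition \ref{regular lem}. The only differences are cosmetic: the paper outsources part $(a)$ to a citation (while sketching the same restriction argument), and it leaves the bijectivity of the induced map on quotients implicit where you supply the surjectivity-plus-counting argument.
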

\begin{proof}By (\ref{g relation}) and the fact that $M$ is a characteristic subgroup of $N$, plainly $H$ is a subgroup of $G$, which has the same order as $M$ since $\fg$ is bijective. 

\vspace{1.5mm}

That $(H,M)$ is realizable was shown in \cite[Proposition 3.3]{Tsang solvable}. The idea was that via restriction $\ff$ induces a homomorphism
\[ \underline{\ff}_M \in \Hom(H,\Aut(M)); \hspace{1em}\underline{\ff}_M(\tau) = (\eta\mapsto\ff(\tau)(\eta))\]
since $M$ is characteristic, and $\fg$ induces a bijective crossed homomorphism
\[ \underline{\fg}_M\in Z^1_{\underline{\ff}_M}(H,M);\hspace{1em}\underline{\fg}_M(\tau) = \fg(\tau)\]
since $M =\fg(H)$. From Proposition \ref{regular lem}, we then obtain a regular subgroup
%\[ \{\rho(\underline{\fg}_M(\tau))\cdot\underline{\ff}_M(\tau):\tau\in H\}\]
of $\Hol(M)$ isomorphic to $H$, whence $(H,M)$ is realizable.

\vspace{1.5mm}

Suppose now that $H$ lies in $Z(G)$. It is clear that $H$ is a normal subgroup of $G$. First, we show that $\ff$ induces a well-defined homomorphism
\[ \overline{\ff}_M\in \Hom(G/H,\Aut(N/M));\hspace{1em}\overline{\ff}_M(\sigma H) = (\eta M\mapsto \ff(\sigma)(\eta)M).\]
For any $\sigma\in G$ and $\tau\in H$, since $H$ lies in $Z(G)$, by (\ref{g relation}) we have
\[ \fg(\tau)\cdot \ff(\tau)(\fg(\sigma)) = \fg(\tau\sigma) = \fg(\sigma\tau) = \fg(\sigma)\cdot \ff(\sigma)(\fg(\tau)).\]
But $M = \fg(H)$ is characteristic, so reducing mod $M$ then yields
\[ \ff(\tau)(\fg(\sigma)) \equiv \fg(\sigma)\hspace{-2mm}\pmod{M}.\]
Since $\fg$ is bijective, this implies that $\ff(\tau)$ induces the identity automorphism on $N/M$. This holds for all $\tau\in H$, whence $\overline{\ff}_M$ is indeed well-defined. Similarly $\fg$ induces a bijective crossed homomorphism
\[ \overline{\fg}_M\in Z^1_{\overline{\ff}_M}(G/H,N/M);\hspace{1em}\overline{\fg}_M(\sigma H) = \fg(\sigma)M,\]
which is also well-defined by (\ref{g relation}) because $M = \fg(H)$ is characteristic. From Proposition \ref{regular lem}, we then get a regular subgroup
%\[ \{ \rho(\overline{\fg}_M(\sigma H))\cdot\overline{\ff}_M(\sigma H):\sigma \in G \}\]
of $\Hol(N/M)$ isomorphic to $G/H$, whence $(G/H,N/M)$ is realizable.
\end{proof}

\subsection{Fixed point free pairs of homomorphisms}\label{fpf sec} To prove that $(G,N)$ is realizable, one approach is to use homomorphisms $f,h\in\Hom(G,N)$ such that $(f,h)$ is \emph{fixed point free}, namely $f(\sigma) = h(\sigma)$ if and only if $\sigma = 1_G$. This was introduced by N. P. Byott and  L. N. Childs in \cite{BC}.

\begin{prop}\label{fpf prop}Let there exist $f,h\in \Hom(G,N)$ such that $(f,h)$ is fixed point free. Then $(G,N)$ is realizable.
\end{prop}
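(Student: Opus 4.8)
The plan is to produce the required regular subgroup directly through the criterion of Proposition~\ref{regular lem}: I will exhibit an explicit pair $(\ff,\fg)$ with $\ff\in\Hom(G,\Aut(N))$ and $\fg\in Z_\ff^1(G,N)$ bijective, built out of the given homomorphisms $f,h$. Concretely I would set
\[ \ff(\sigma)=\conj(f(\sigma)),\qquad \fg(\sigma)=h(\sigma)f(\sigma)^{-1}\qquad(\sigma\in G),\]
where $\conj(\eta)\in\Inn(N)\subseteq\Aut(N)$ denotes conjugation $x\mapsto \eta x\eta^{-1}$. Since $\conj\colon N\to\Aut(N)$ and $f\colon G\to N$ are both homomorphisms, $\ff$ is at once a homomorphism $G\to\Aut(N)$, so that part needs no argument.

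Next I would verify that $\fg$ is a crossed homomorphism with respect to $\ff$, i.e. that it satisfies (\ref{g relation}). This is the routine computation
\[ \fg(\sigma)\cdot\ff(\sigma)(\fg(\tau))=h(\sigma)f(\sigma)^{-1}\cdot f(\sigma)h(\tau)f(\tau)^{-1}f(\sigma)^{-1}=h(\sigma)h(\tau)f(\tau)^{-1}f(\sigma)^{-1},\]
which equals $h(\sigma\tau)f(\sigma\tau)^{-1}=\fg(\sigma\tau)$ because $f$ and $h$ are homomorphisms. There is no obstacle here beyond bookkeeping.

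The one place where the hypothesis genuinely enters — and the step I would regard as the crux — is showing that $\fg$ is bijective. As $|G|=|N|<\infty$, it suffices to prove $\fg$ injective. Suppose $\fg(\sigma)=\fg(\tau)$, that is, $h(\sigma)f(\sigma)^{-1}=h(\tau)f(\tau)^{-1}$. Rearranging and using that $f,h$ are homomorphisms gives
\[ h(\tau^{-1}\sigma)=h(\tau)^{-1}h(\sigma)=f(\tau)^{-1}f(\sigma)=f(\tau^{-1}\sigma),\]
so $f$ and $h$ agree at $\tau^{-1}\sigma$; the fixed point free assumption then forces $\tau^{-1}\sigma=1_G$, whence $\sigma=\tau$. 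This is precisely where fixed point freeness is indispensable, being exactly the statement that the identity is the only element of $G$ on which $f$ and $h$ agree. With $\fg$ shown bijective, Proposition~\ref{regular lem} delivers the regular subgroup $\{\rho(\fg(\sigma))\ff(\sigma):\sigma\in G\}$ of $\Hol(N)$ isomorphic to $G$, and hence $(G,N)$ is realizable.

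I would also note an equivalent but more transparent route that bypasses Proposition~\ref{regular lem}: define $\beta\colon G\to\Perm(N)$ by $\beta(\sigma)(x)=f(\sigma)\,x\,h(\sigma)^{-1}$. One checks directly that $\beta$ is a homomorphism, that $\beta(\sigma)=\lambda(f(\sigma))\rho(h(\sigma))$ lies in $\Hol(N)=\lambda(N)\rtimes\Aut(N)$, and that the stabilizer of $1_N$ in $\beta(G)$ is trivial (once again exactly by fixed point freeness, since $\beta(\sigma)(1_N)=f(\sigma)h(\sigma)^{-1}=1_N$ forces $\sigma=1_G$); an orbit--stabilizer count then gives regularity and injectivity of $\beta$. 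Either way the single substantive input is the fixed point free condition, and everything else is formal.
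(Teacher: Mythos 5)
Your proof is correct and takes essentially the same approach as the paper: the paper exhibits the subgroup $\{\rho(h(\sigma))\lambda(f(\sigma)):\sigma\in G\}$ (citing \cite[Proposition 1]{BC} for the verification) and then records exactly your pair $\ff(\sigma)=\conj(f(\sigma))$, $\fg(\sigma)=h(\sigma)f(\sigma)^{-1}$ as the corresponding data for Proposition~\ref{regular lem}, so both of your routes appear there with the emphasis reversed. The only difference is that you supply the routine computations (the crossed-homomorphism identity and the injectivity of $\fg$ from fixed point freeness) in place of the citation, which is a harmless and complete substitute.
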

%Note that $\mathcal{G}_{(f,g)}$ must be isomorphic to $G$ when it is a regular subgroup.
\begin{proof}Since elements in $\lambda(N)$ and $\rho(N)$ commute, plainly
\[ \{ \rho(h(\sigma)) \lambda(f(\sigma)) : \sigma \in G\}\]
is a subgroup of $\Hol(N)$ isomorphic to $G$, whose regularity follows from the fixed-point freeness of $(f,h)$; see \cite[Proposition 1]{BC} for a proof. We note that in the notation of Proposition \ref{regular lem}, this corresponds to
\[ \ff \in \Hom(G,\Aut(N));\hspace{1em} \ff(\sigma) = \conj(f(\sigma)),\]
where $\conj(\eta) = (x\mapsto \eta x \eta^{-1})$ denotes conjugation by $\eta$, and
\[ \fg \in Z_{\ff}^1(G,N);\hspace{1em} \fg(\sigma) = h(\sigma)f(\sigma)^{-1},\]
which is bijective because $(f,h)$ is fixed point free.
\end{proof}

The next proposition is from \cite[Lemma 7.1]{Byott soluble}. %, one can construct fixed point free pairs of homomorphisms from exact factorization or the so-called Zappa-Sz\'{e}p product of groups as follows.

%when $N$ admits an \emph{exact factorization}, that is when $N = N_1N_2$ for subgroups $N_1$ and $N_2$ such that $N_1\cap N_2=1$, one can construct fixed point free pairs of homomorphisms as follows.%$N_1\times N_2\longrightarrow N$ as follows.%. In this case $N$ is also said to be a \emph{Zappa-Sz\'{e}p product} of $N_1$ and $N_2$.

\begin{prop}\label{product fpf}Suppose that $N = N_1N_2$ for subgroups $N_1$ and $N_2$ such that $N_1\cap N_2=1$. Then $(N_1\times N_2,N)$ is realizable.
\end{prop}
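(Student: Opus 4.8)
The plan is to apply Proposition \ref{fpf prop}: it suffices to produce a pair $(f,h)$ of homomorphisms from $G = N_1\times N_2$ into $N$ that is fixed point free. Before doing so, I would record that the statement is well-posed, since the hypotheses $N = N_1N_2$ and $N_1\cap N_2 = 1$ together force $|N| = |N_1|\,|N_2| = |N_1\times N_2|$, so the two groups do have the same order.

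The natural candidates are the maps induced by the two inclusions $N_1,N_2\hookrightarrow N$. I would take
\[ f(a,b) = a, \qquad h(a,b) = b \qquad (a\in N_1,\ b\in N_2).\]
Each is a homomorphism, being a coordinate projection off the direct product (a homomorphism because the source is $N_1\times N_2$) followed by the inclusion of a subgroup into $N$; this is the only routine verification and presents no difficulty.

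The key step is then fixed-point freeness. If $f(a,b) = h(a,b)$, then $a = b$ as elements of $N$, so this common element lies in $N_1\cap N_2 = 1$; hence $a = b = 1_N$ and $(a,b) = 1_G$. The converse $f(1_G) = 1_N = h(1_G)$ is immediate. Thus $(f,h)$ is fixed point free, and Proposition \ref{fpf prop} delivers the realizability of $(N_1\times N_2, N)$.

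I do not anticipate a genuine obstacle here: the whole content is the observation that the hypothesis $N_1\cap N_2 = 1$ is exactly what makes the two inclusions disagree away from the identity. The one subtlety worth flagging is that $N_1$ and $N_2$ are not assumed normal, so $N$ is a general (Zappa--Szép) product rather than a semidirect product; but normality is never used, since the argument relies only on $f$ and $h$ being homomorphisms on the direct product with trivially intersecting images.
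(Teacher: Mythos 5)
Your proof is correct and is essentially identical to the paper's: the paper also invokes Proposition \ref{fpf prop} with the same pair $f(\eta_1,\eta_2)=\eta_1$, $h(\eta_1,\eta_2)=\eta_2$, whose fixed-point freeness follows from $N_1\cap N_2=1$ exactly as you argue. Your added remarks on well-posedness and the irrelevance of normality are accurate but not needed.
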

\begin{proof}This follows from Proposition \ref{fpf prop} and the obvious fact that $(f,h)$ is a fixed point free pair for $f,h\in\Hom(N_1\times N_2,N)$ defined by $f(\eta_1,\eta_2) = \eta_1$ and $h(\eta_1,\eta_2) = \eta_2$.\end{proof}

As noted in Proposition \ref{C prop}, an easy application of Proposition \ref{product fpf} shows that $(C_n,N)$ is always realizable for $C$-groups $N$ of order $n$. However, as the next proposition shows, there is no fixed point free pair of homomorphisms from $C_n$ to $N$ for non-$C$-groups $N$. Therefore, we cannot simply use Proposition \ref{fpf prop} to prove realizability in Theorem \ref{main thm}. Instead, we shall exhibit the existence of a cyclic regular subgroup in $\Hol(N)$ using a direct approach.

\begin{prop}\label{no fpf prop}Let $N$ be a group of order $n$ such that there exists a fixed point free pair $(f,h)$ with $f,h\in\Hom(C_n,N)$. Then $N$ is a $C$-group.
\end{prop}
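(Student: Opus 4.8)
The plan is to read off from the pair $(f,h)$ two cyclic subgroups of $N$ whose orders have least common multiple equal to $n$, and then to conclude that every Sylow subgroup of $N$ is cyclic. First I would fix a generator $x$ of $C_n$ and set $a = f(x)$ and $b = h(x)$, so that $f(x^i) = a^i$ and $h(x^i) = b^i$ for every $i$. Since $f$ and $h$ are homomorphisms out of a group of order $n$, both $\mathrm{ord}(a)$ and $\mathrm{ord}(b)$ divide $n$. The fixed point free hypothesis then unwinds to the purely arithmetic statement that $a^i = b^i$ holds if and only if $n \mid i$; in particular its forward direction gives the implication $a^i = b^i \Rightarrow n \mid i$, which is the only consequence I shall actually need.

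The key observation is that $\mathrm{lcm}(\mathrm{ord}(a),\mathrm{ord}(b)) = n$. Writing $L$ for this least common multiple, we have $a^L = b^L = 1_N$ and hence $a^L = b^L$, so fixed point freeness forces $n \mid L$; on the other hand $L \mid n$ is automatic because $\mathrm{ord}(a)$ and $\mathrm{ord}(b)$ both divide $n$, whence $L = n$. With this in hand I would argue Sylow by Sylow. Fix a prime $p \mid n$ and let $p^k$ be the largest power of $p$ dividing $n$. Comparing $p$-adic valuations in the identity $\mathrm{lcm}(\mathrm{ord}(a),\mathrm{ord}(b)) = n$, and using that neither $\mathrm{ord}(a)$ nor $\mathrm{ord}(b)$ can have $p$-part exceeding $p^k$, one of the two orders is divisible by $p^k$; say $p^k \mid \mathrm{ord}(a)$. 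Then the cyclic group $\langle a\rangle$ contains a cyclic subgroup of order $p^k$, which, having order equal to the full power of $p$ dividing $|N|$, is a Sylow $p$-subgroup of $N$. As all Sylow $p$-subgroups are conjugate and hence isomorphic, every Sylow $p$-subgroup of $N$ is cyclic. Since this holds for each prime $p \mid n$, the group $N$ is a $C$-group.

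I do not expect a serious obstacle here: once the identity $\mathrm{lcm}(\mathrm{ord}(a),\mathrm{ord}(b)) = n$ is established, the remainder is a routine valuation-and-Sylow computation, and this result is exactly the converse one expects alongside Proposition \ref{C prop} (which realizes $C$-groups via fixed point free pairs). The single point demanding care is the faithful translation of the fixed point free condition on all of $C_n$ into the statement $a^i = b^i \iff n \mid i$, together with the remark that every element of $C_n$ is a power of the generator $x$; everything downstream rests on applying this to the one exponent $L$.
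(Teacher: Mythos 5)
Your proposal is correct and takes essentially the same approach as the paper: the key step is identical, namely that applying fixed point freeness to the exponent $L=\mathrm{lcm}(|f(x)|,|h(x)|)$, at which both images are trivial, forces $L=n$. The only difference is cosmetic packaging of the conclusion --- the paper splits the primes of $n$ between the two orders to exhibit $N$ as a product of two cyclic subgroups of coprime orders, while you argue prime-by-prime that each Sylow subgroup is cyclic; the same Sylow observation underlies both endgames.
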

\begin{proof}Let $\sigma$ be a generator of $C_n$, and put
\[ d_f = |f(\sigma)|,\,\ d_h = |h(\sigma)|,\,\ g = \gcd(d_f,d_h).\]
Then $\sigma^{d_fd_h/g} =1_G$ because $(f,h)$ is fixed point free and
\[ f(\sigma)^{d_f(d_h/g)} = 1_N = h(\sigma)^{d_h(d_f/g)}.\]
But $d_fd_h/g$ divides $n$ because both $d_f,d_h/g$ divide $n$ and $\gcd(d_f,d_h/g) =1$. It follows that $d_fd_h / g =n$ and so $n = \mathrm{lcm}(d_f,d_h)$.
%Now, for any prime factor $p$ of $n$, let $p^e,p^{e_f},p^{e_h}$, respectively, denote the exact powers of $p$ dividing $n,d_f,d_h$. We have
%\[ e_f + e_h = \min\{e_f,e_h\} + e,\mbox{ and so }e = \max\{e_f,e_h\}.\]
Hence, we may write
\begin{align*}
 d_f &= p_1^{e_1}\cdots p_a^{e_a} g_f,\\
  d_h &= p_{a+1}^{e_{a+1}}\cdots p_{b}^{e_b}g_h,\end{align*}
where $p_1,\dots,p_a,p_{a+1},\dots,p_b$ are distinct primes and $g_f,g_h\in\mathbb{N}$, such that
\[ n = p_1^{e_1}\cdots p_a^{e_a}p_{a+1}^{e_{a+1}}\cdots p_b^{e_b}\]
is the prime factorization of $n$. Then
\[ |f(\sigma)^{g_f}| = p_1^{e_1}\cdots p_a^{e_a},\,\ |h(\sigma)^{g_h}| = p_{a+1}^{e_{a+1}}\cdots p_b^{e_b},\,\ |f(\sigma)^{g_f}||h(\sigma)^{g_h}| =n.\]
We deduce that $N =  \langle f(\sigma)^{g_f} \rangle\langle h(\sigma)^{g_h}\rangle$ is the product of two cyclic subgroups of coprime orders, and thus $N$ is a $C$-group.
\end{proof}

\section{Preliminary restriction}

Let us first prove a preliminary version of Theorem \ref{main thm}:

\begin{thm}\label{pre thm}Let $N$ be a group of order $n$ such that $(C_n,N)$ is realizable. Then either $N$ is a $C$-group or $N\simeq M\rtimes P$ for some $C$-group $M$ of odd order and $P= D_{2^m}$ with $m\geq 2$ or $P= Q_{2^m}$ with $m\geq 3$.
\end{thm}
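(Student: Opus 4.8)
The plan is to combine the structural restriction from Proposition~\ref{cyclic prop} with the $2$-local information from Propositions~\ref{p prop1} and~\ref{p prop2}, using Proposition~\ref{char prop} as the engine that passes realizability from $(C_n,N)$ down to Sylow-type quotients and subgroups. By Proposition~\ref{cyclic prop} we already know $N$ is supersolvable and metabelian, hence in particular solvable with a well-behaved Sylow structure. The first goal is to understand the Sylow $p$-subgroups of $N$. For an odd prime $p$, I would fix a bijective crossed homomorphism $\fg\in Z^1_\ff(C_n,N)$ realizing the pair, take a suitable characteristic subgroup $M$ of $N$ of $p$-power order (for instance arising from the Sylow structure of the nilpotent-by-something decomposition), and apply Proposition~\ref{char prop}(a) to conclude that $(H,M)$ is realizable for $H=\fg^{-1}(M)\leq C_n$. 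Since $H$ is cyclic of order $|M|=p^m$, Proposition~\ref{p prop1} forces $M\simeq C_{p^m}$, i.e. the relevant odd-order characteristic $p$-sections are cyclic.

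Next I would isolate the $2$-part. The aim is to show that the Sylow $2$-subgroup $P$ of $N$ is cyclic, dihedral, or generalized quaternion. Here I would again locate a characteristic subgroup of $2$-power order (the supersolvability and metabelianity of $N$ should guarantee a characteristic Sylow tower or at least a characteristic $2$-section to work with), pull it back under $\fg$ to a cyclic subgroup $H\leq C_n$ of $2$-power order, and invoke Proposition~\ref{char prop}(a) together with Proposition~\ref{p prop2}(b). That proposition pins the realizable groups of order $2^m$ (for $m\geq 3$) down to exactly $C_{2^m}$, $D_{2^m}$, $Q_{2^m}$, which is precisely the trichotomy I want for $P$. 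The case $m\leq 2$ is absorbed because $C_2$ and $C_4$ and the Klein four-group are all either cyclic or equal to $D_4$.

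With both halves in hand, the final step is to assemble the semidirect product decomposition $N\simeq M\rtimes P$. Since $N$ is solvable and $2$-nilpotent considerations are available (supersolvable groups have ordered Sylow towers), I would argue that $N$ has a normal Hall $2'$-subgroup $M$ of odd order, so that by Schur--Zassenhaus $N=M\rtimes P$ with $P$ a Sylow $2$-subgroup. The odd part $M$, having all its characteristic $p$-sections cyclic by the first step and being itself of odd order, is a $C$-group. If $P$ is cyclic, then $N$ is a $C$-group and we are in the first alternative of the theorem; otherwise $P$ is $D_{2^m}$ with $m\geq 2$ or $Q_{2^m}$ with $m\geq 3$, giving the second alternative.

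The main obstacle I anticipate is the extraction of suitable \emph{characteristic} subgroups of the correct $p$-power order: Proposition~\ref{char prop} only applies to genuinely characteristic subgroups, not arbitrary Sylow subgroups, so I must leverage supersolvability and metabelianity to produce characteristic $p$-subgroups (or characteristic sections) whose orders are exactly the Sylow orders. Establishing that the Sylow $p$-subgroups themselves are cyclic for odd $p$---not merely some characteristic piece of them---likely requires an inductive argument climbing a characteristic chief series, combined at each stage with the cyclicity forced by Proposition~\ref{p prop1}. Verifying that the odd-order Hall subgroup $M$ is normal and characteristic enough to decompose $N$ cleanly, so that what remains is genuinely a $2$-group of the restricted isomorphism type, is where the care will be needed; the rest is a matter of bookkeeping against the two cited propositions.
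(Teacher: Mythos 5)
There is a genuine gap, and it sits exactly where you flagged it: the characteristic subgroups your argument needs do not exist in general. Proposition \ref{char prop}(a) applies only to genuinely characteristic subgroups of $N$, and a group satisfying the hypotheses of the theorem need not have any characteristic (or even normal) subgroup whose order is a full Sylow $p$-power. This already fails for intermediate odd primes (the Sylow $3$-subgroup of $(C_7\rtimes C_3)\times C_2$, a $C$-group for which $(C_{42},N)$ is realizable, is not normal), and it fails fatally for $p=2$: supersolvability gives a Sylow tower ordered by \emph{decreasing} primes, so the Sylow $2$-subgroup sits at the \emph{top} of the tower and appears naturally as a quotient, not as a normal subgroup. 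For instance $N=S_3$ satisfies the hypotheses (it is a $C$-group, so $(C_6,S_3)$ is realizable) but has no normal subgroup of order $2$; hence your step ``locate a characteristic subgroup of $2$-power order, pull it back under $\fg$, and invoke Proposition \ref{char prop}(a) together with Proposition \ref{p prop2}(b)'' cannot be carried out, and no amount of leveraging supersolvability and metabelianity will produce the required subgroup. Your parenthetical fallback of working with a ``characteristic $2$-section'' is the right instinct, but sections are exactly what part (a) cannot reach: that is what part (b) of Proposition \ref{char prop} is for, and your proposal never invokes part (b) at all.

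The paper closes this gap by never applying Proposition \ref{char prop}(a) to a $p$-group. Writing $n=p_1^{e_1}\cdots p_b^{e_b}$ with $p_1>\cdots>p_b$, it takes $M=P_1\cdots P_{b-1}$, the Hall subgroup omitting the \emph{smallest} prime; by the supersolvable Sylow tower $M$ is a normal Hall subgroup, hence characteristic. Part (a) then shows $(H,M)$ is realizable with $H\leq C_n$ cyclic, which drives an induction on $b$, the number of prime divisors (this induction, iterated through the tower, is the precise form of your ``climbing a characteristic chief series'' and is what ultimately proves every Sylow subgroup of the odd part is cyclic); and part (b) --- whose hypothesis $H\leq Z(C_n)$ is automatic because $C_n$ is abelian --- shows $(C_n/H,\,N/M)$ is realizable with $C_n/H$ cyclic of order $p_b^{e_b}$, whereupon Propositions \ref{p prop1} and \ref{p prop2} identify the quotient $N/M\simeq P$ as cyclic, dihedral, or generalized quaternion. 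So the smallest prime, and in particular the $2$-part, is only ever reached as a quotient via part (b). Your outline assembles the right ingredients and the correct final dichotomy, but without this Hall-subgroup-plus-quotient mechanism the two central steps of your plan (cyclicity of the odd Sylows, and the trichotomy for $P$) cannot be executed as stated.
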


%In particular $N$ must be a $C$-group when $n$ is not divisible by $4$.

\begin{proof}Let $n = p_1^{e_1}\cdots p_b^{e_b}$ be the prime factorization of $n$ with $p_1>\cdots >p_b$. For each $1\leq a \leq b$, let $P_a$ be a Sylow $p_a$-subgroup of $N$. Put
\[ M = P_1\cdots P_{b-1}\mbox{ and }P = P_b.\]
Recall that $N$ has to be supersolvable by Proposition \ref{cyclic prop}. Then, it is known (see \cite[Corollary VII.5.h]{GT book} for example) that $M$ is a normal subgroup of $N$, and we have $N = M\rtimes P$. 
%Therefore, it suffices to show that 
%\begin{equation}\label{claim}
%\mbox{$M$ is a $C$-group and $P \simeq D_{2^m}$ with $m\geq 2$ or $P\simeq Q_{2^m}$ with $m\geq 3$}.
%\end{equation}
But plainly $M$ is a characteristic subgroup of $N$, so by Proposition \ref{char prop}, there is a subgroup $H$ of $C_n$ (of the same order as $M$) such that the pairs $(H,M)$ and $(C_n/H,N/M)$ are both realizable. Note that 
\[ H \simeq C_{p_1^{e_1}\cdots p_{b-1}^{e_{b-1}}} \mbox{ and }C_n/H \simeq C_{p_b^{e_b}}\]
are both cyclic. Thus, we may prove the claim using induction on $b$.

\vspace{1.5mm}

First, consider the case when $n$ is odd. For $b=1$, we know by Proposition \ref{p prop1} that $N\simeq C_{p_1^{e_1}}$ and hence is a $C$-group. For $b\geq2$, by induction we may assume that $M$ is a $C$-group, which implies that $P_1,\dots,P_{b-1}$ are all cyclic. But $P \simeq N/M$ is also cyclic by Proposition \ref{p prop1}, whence $N$ is a $C$-group.

\vspace{1.5mm}

Next, consider the case when $n$ is even, so then $p_b=2$. Since $M$ has odd order, we already know that $M$ must be a $C$-group. If $P$ is cyclic, then $N$ is a $C$-group as above. If $P\simeq N/M$ is non-cyclic, then necessarily
\[ P\simeq \begin{cases}D_4&\mbox{when }e_b=2,\\  D_{2^{e_b}},Q_{2^{e_b}}&\mbox{when }e_b\geq 3\end{cases}\]
by Proposition \ref{p prop2}. This completes the proof of the theorem.
\end{proof}

\begin{remark}\label{pre remark}
The converse of Theorem \ref{pre thm} is not true. For example, as mentioned in the introduction, the pair $(C_{84},N)$ is not realizable for 
\[ N = \textsc{SmallGroup}(84,8)\mbox{ but }N\simeq C_{21}\rtimes_\alpha D_4,\]
as one can check in \textsc{Magma} \cite{magma}. Alternatively, this group $N$ corresponds to when $\alpha : D_4\longrightarrow \Aut(C_{21})$ embeds $D_4$ into the unique Sylow $2$-subgroup of $\Aut(C_{21})$. One sees that $N\simeq D_{14}\times D_{6}$ in this case. Since both factors $D_{14}$ and $D_{6}$ are characteristics, we have
\[ \Hol(N) \simeq \Hol(D_{14})\times \Hol(D_6).\]
The structure of the automorphism groups of dihedral groups is well-known (see \cite[Theorem 1.4]{Conrad D} for example). It is not hard to see that $\Hol(D_{14})$ and $\Hol(D_{6})$ do not have any elements of order $4$. This implies that $\Hol(N)$ does not even have a cyclic subgroup of order $84$, let alone a regular one. Hence, indeed $(C_{84},N)$ is not realizable.

\vspace{1.5mm}

%We thank the referee for providing this proof.
%To prove Theorem \ref{main thm}, we would need to understand more about the properties of semidirect products of $C$-groups with dihedral or generalized quaternion groups. 
\end{remark}

\section{Groups of the shape $M\rtimes_\alpha P$}

Throughout this section, let $M$ denote the $C$-group
\[ C(e,d,k) = \langle x,y \mid x^e = 1,\, y^d = 1,\, yxy^{-1} = x^{k}\rangle\]
for $\gcd(e,d) = \gcd(e,k) = 1$, and the order $\mbox{ord}_e(k)$ of $k$ in $(\mathbb{Z}/e\mathbb{Z})^\times$ divides $d$. Also, let $P$ denote the dihedral group
\[D_{2^m} = \langle r,s \mid r^{2^{m-1}} =1,\, s^2 = 1,\, srs^{-1} = r^{-1}\rangle\]
with $m\geq 2$ or the generalized quaternion group
\[ Q_{2^m} = \langle r,s \mid  r^{2^{m-1}} = 1, \, s^2 = r^{2^{m-2}},\, srs^{-1}=r^{-1}\rangle\]
with $m\geq 3$. In order to prove Theorem \ref{main thm}, we shall need to understand the structure of the semidirect products $M\rtimes_\alpha P$ for $\alpha\in \Hom(P,\Aut(M))$.

\subsection{Automorphism group of $C$-groups}\label{Aut sec}

Let us first determine the automorphism group $\Aut(M)$ of $M$ in a way that is analogous to \cite[Lemma 4.1]{AB}, which treats the special case when $ed$ is squarefree. %We note that our calculations here do not require (\ref{k ass}).

\vspace{1.5mm}
For $h\in \mathbb{Z}$ and $\ell\in \mathbb{N}_{\geq 0}$, let us define
\[ S(h,\ell) = \sum_{a= 0}^{\ell-1} h^a = 1 + h + \cdots + h^{\ell-1},\]
with the empty sum $S(h,0)$ representing zero. For $i,j\in\mathbb{Z}$, a simple calculation using induction on $\ell$ and the relation $yxy^{-1} = x^k$ yields
\[ (x^iy^j)^\ell = x^{iS(k^j,\ell)}y^{j\ell}.\]
We shall use this identity without reference in what follows. Also put
\[ z = \gcd(e,k-1)\mbox{ and }g = e/z.\]
Further, consider the multiplicative groups
\begin{align*}
 U(e) = (\mathbb{Z}/e\mathbb{Z})^\times\mbox{ and }
 U_k(d) = \{v \in (\mathbb{Z}/d\mathbb{Z})^\times \mid v\equiv1\hspace{-4mm}\pmod{\mathrm{ord}_e(k) }\}.
 \end{align*}
Recall that $\mbox{ord}_e(k)$ denotes the order of $k$ in $(\mathbb{Z}/e\mathbb{Z})^\times$ and it divides $d$.

\begin{lem}\label{Aut def} For any $u\in U(e)$ and $v\in U_k(d)$, the definitions
\[\begin{cases}\theta(x) = x \\ \theta(y) = x^zy
\end{cases}
\begin{cases}\phi_u(x) = x^u\\ \phi_u(y) = y
\end{cases}
\begin{cases}\psi_v(x) = x\\ \psi_v(y) = y^v
\end{cases}\]
extend to automorphisms on $M$. Moreover, we have the relations
\begin{equation}\label{relations} 
\theta^g = \mathrm{Id}_M,\,\ \phi_u\theta\phi_u^{-1} = \theta^u,\,\ \theta\psi_v = \psi_v\theta,\,\ \phi_u\psi_v = \psi_v\phi_u.\end{equation}
\end{lem}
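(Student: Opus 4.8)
The plan is to handle each of the three maps by the same two-step recipe. First I would invoke the universal property of the presentation of $M$: to see that an assignment on the generators $x,y$ extends to an endomorphism, it suffices to check that it respects the three defining relations $x^e=1$, $y^d=1$, $yxy^{-1}=x^k$. Then I would upgrade each endomorphism to an automorphism by exhibiting a two-sided inverse (surjectivity alone would do, since $M$ is finite). Throughout I would rely on the power identity $(x^iy^j)^\ell = x^{iS(k^j,\ell)}y^{j\ell}$ recorded before the statement, together with one elementary arithmetic observation: writing $e=zg$ and $k-1=zw$, the equality $z=\gcd(e,k-1)$ forces $\gcd(g,w)=1$.

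For $\phi_u$ all three relations are immediate, since $\phi_u$ merely rescales the exponent of $x$ by the unit $u\in U(e)$, and its inverse is visibly $\phi_{u^{-1}}$ with $u^{-1}$ the inverse of $u$ in $U(e)$. The real content lies in $\theta$ and $\psi_v$, where in each case exactly one relation requires work. For $\theta$, the relations $x^e=1$ and $yxy^{-1}=x^k$ hold by direct substitution, and the substance is $\theta(y)^d=(x^zy)^d=1$: by the power identity this equals $x^{zS(k,d)}$, so I must show $zS(k,d)\equiv 0\pmod{e}$. For $\psi_v$, the relations $x^e=1$ and $y^d=1$ are clear, and the substance is $yxy^{-1}=x^k$, which after conjugating $v$ times demands $k^{v}\equiv k\pmod{e}$, i.e. $k^{v-1}\equiv1\pmod{e}$; this is precisely the defining congruence $v\equiv1\pmod{\mathrm{ord}_e(k)}$ of $U_k(d)$, which is what pins down that hypothesis.

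I expect the main obstacle to be the recurring congruence $zS(k,\ell)\equiv 0\pmod{e}$, which I would isolate as a single computation. With $t=\mathrm{ord}_e(k)$, the identity $(k-1)S(k,t)=k^t-1\equiv0\pmod{e}$ reads $zwS(k,t)\equiv0\pmod{zg}$, hence $wS(k,t)\equiv0\pmod{g}$, and so $S(k,t)\equiv0\pmod{g}$ because $\gcd(w,g)=1$; multiplying back by $z$ gives $zS(k,t)\equiv0\pmod{e}$. Since $t\mid d$, summing the geometric series in blocks of length $t$ yields $S(k,d)\equiv (d/t)S(k,t)\pmod{e}$, which settles $\theta(y)^d=1$. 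The same mechanism handles the last relation $\theta\psi_v=\psi_v\theta$: evaluating on $y$ and applying the power identity reduces it to $zS(k,v)\equiv z\pmod{e}$, and since $v\equiv1\pmod{t}$ one has $S(k,v)\equiv 1+\tfrac{v-1}{t}S(k,t)\pmod{e}$, so the surplus term dies after multiplication by $z$.

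It remains to record bijectivity and the easy relations, all of which are short evaluations on generators. An immediate induction gives $\theta^\ell(y)=x^{\ell z}y$, so $\theta^g(y)=x^{gz}y=x^ey=y$; this delivers simultaneously $\theta^g=\mathrm{Id}_M$ and the bijectivity of $\theta$. The relation $\phi_u\theta\phi_u^{-1}=\theta^u$ follows by checking that both sides fix $x$ and send $y\mapsto x^{uz}y$, and $\phi_u\psi_v=\psi_v\phi_u$ is trivial since $\phi_u$ fixes $y$ while $\psi_v$ fixes $x$. Finally $\psi_v$ is invertible with inverse $\psi_{v^{-1}}$ once one notes that $U_k(d)$ is a genuine subgroup of $(\mathbb{Z}/d\mathbb{Z})^\times$, being the kernel of reduction modulo $\mathrm{ord}_e(k)$, so that $v^{-1}$ again lies in $U_k(d)$.
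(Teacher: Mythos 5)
Your proof is correct, and it follows the same skeleton as the paper's: check the three defining relations on the generators to obtain endomorphisms, upgrade them to automorphisms, then verify the four relations in (\ref{relations}) by evaluating on $x$ and $y$. The genuine difference lies in the arithmetic kernel. The paper establishes $zS(k,d)\equiv 0\pmod{e}$ and $zS(k,v)\equiv z\pmod{e}$ by cancelling the factor $\frac{k-1}{z}$ modulo $e$, asserting $\gcd(\frac{k-1}{z},e)=1$; you instead cancel $w=(k-1)/z$ only modulo $g=e/z$, where $\gcd(w,g)=1$ holds unconditionally (a common prime $q$ would force $zq\mid\gcd(e,k-1)=z$), deduce $S(k,t)\equiv 0\pmod{g}$ for $t=\mathrm{ord}_e(k)$, and then transfer this to $S(k,d)$ and $S(k,v)$ by summing the geometric series in blocks of length $t$. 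Your route costs an extra reduction step but is strictly more robust: the paper's coprimality claim depends on the integer representative chosen for $k$ and can genuinely fail. For instance, $e=95$, $d=3$, $k=26$ satisfies every standing hypothesis ($\gcd(e,d)=\gcd(e,k)=1$ and $\mathrm{ord}_{95}(26)=3$ divides $d$), yet $z=\gcd(95,25)=5$ and $\frac{k-1}{z}=5$ shares the factor $5$ with $e$; the desired congruence $zS(k,d)=5\cdot 703\equiv 0\pmod{95}$ is still true, but the paper's cancellation does not justify it (one would first have to replace $k$ by a representative such as $k+e=121$). Your version also handles $k\equiv 1$ uniformly, whereas the paper sets that case aside at the outset. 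The remaining differences --- exhibiting explicit inverses $\phi_{u^{-1}}$, $\psi_{v^{-1}}$ and deducing bijectivity of $\theta$ from $\theta^g=\mathrm{Id}_M$, versus the paper's remark that the images contain $x$ and $y$ --- are routine variations.
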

\begin{proof}We may assume that $k\neq1$, for otherwise $M\simeq C_e \times C_d$ (with $z=e$ and so $\theta$ is the identity), in which case all of the claims are trivial. 

\vspace{1.5mm}

First, we check that the three relations 
\[x^e = 1,\, y^d = 1,\, yxy^{-1} = x^k\]
in the presentation of $M$ are preserved under these maps. Clearly
\[ \theta(x)^e = \phi_u(x)^e = \psi_v(x)^e = 1\mbox{ and }\phi_u(y)^d = \psi_v(y)^d = 1\]
are satisfied. We compute that
\[ \theta(y)^d = (x^zy)^d = x^{zS(k,d)}y^d = x^{zS(k,d)}.\]
Since $\mathrm{ord}_e(k)$ divides $d$, we have
\[ \left(\frac{k-1}{z}\right)zS(k,d) \equiv k^d -1 \equiv 0\pmmod{e}.\]
But $\gcd(\frac{k-1}{z},e)=1$, so then $zS(k,d)\equiv 0\pmod{e}$ and we obtain $\theta(y)^d=1$. A simple calculation also yields
\begin{align*}
\theta(y)\theta(x)\theta(y)^{-1} & = (x^zy) x (x^zy)^{-1} = x^k = \theta(x)^k,\\
\phi_u(y)\phi_u(x)\phi_u(y)^{-1} & = y x^u y^{-1} = x^{uk} = \phi_u(x)^k,\\
\psi_v(y)\psi_v(x)\psi_v(y)^{-1} & = y^v x y^{-v} = x^{k^v}  = x^k = \psi_v(x)^k,
\end{align*}
where $x^{k^v}=x^k$ holds because $v\in U_k(d)$ implies $k^v\equiv k\pmod{e}$. Thus, all of $\theta,\phi_u,\psi_v$ extend to endomorphisms on $M$. It is clear that their images all contain $x,y$, so in fact $\theta,\phi_u,\psi_v$ extend to automorphisms on $M$.

\vspace{1.5mm}

Next, we verify the relations in (\ref{relations}). The first and last equalities are both obvious. For the second equality, a simple calculation shows that
\[ (\phi_u\theta)(x) = x^u = (\theta^u\phi_u)(x)\mbox{ and } (\phi_u\theta)(y) = x^{uz}y = (\theta^u\phi_u)(y).\]
For the third equality, plainly $(\theta \psi_v)(x) = x = (\psi_v\theta)(x)$. We also have
\[ (\theta\psi_v)(y) = (x^zy)^v = x^{zS(k,v)}y^v \mbox{ and }
 (\psi_v\theta)(y) = x^{z}y^v.\]
But $v\in U_k(d)$ implies that $k^v \equiv k\pmod{e}$, so then
\[\left(\frac{k-1}{z}\right)zS(k,v)\equiv k^v - 1\equiv k-1\equiv \left(\frac{k-1}{z}\right)z\pmmod{e}.\]
Since $\gcd(\frac{k-1}{z},e)=1$, this implies that
\begin{equation}\label{z mod e} zS(k,v)\equiv z\pmmod{e}\mbox{ and hence }(\theta\psi_v)(y) = (\psi_v\theta)(y).\end{equation}
It follows that $\theta\psi_v=\psi_v\theta$, as desired.
\end{proof}

\begin{prop}\label{Aut prop}We have 
\[ \Aut(M) = \left(\langle\theta\rangle \rtimes \{\phi_u\}_{u\in U(e)}\right)\times \{\psi_v\}_{v\in U_k(d)}.\]
\end{prop}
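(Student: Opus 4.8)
The plan is to establish Proposition \ref{Aut prop} by showing that the automorphisms $\theta, \phi_u, \psi_v$ constructed in Lemma \ref{Aut def} together generate all of $\Aut(M)$, and that the indicated product decomposition holds. First I would verify that the subgroups $\langle\theta\rangle$, $\{\phi_u\}_{u\in U(e)}$, and $\{\psi_v\}_{v\in U_k(d)}$ have the claimed structure and that the product is (semi)direct as stated. The relations in (\ref{relations}) already do most of this work: the equality $\phi_u\theta\phi_u^{-1}=\theta^u$ shows $\langle\theta\rangle$ is normalized by the $\phi_u$, giving the internal semidirect product $\langle\theta\rangle\rtimes\{\phi_u\}$, while the commuting relations $\theta\psi_v=\psi_v\theta$ and $\phi_u\psi_v=\psi_v\phi_u$ show $\{\psi_v\}$ centralizes this factor, yielding the direct factor. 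One must also confirm $u\mapsto\phi_u$ and $v\mapsto\psi_v$ are injective homomorphisms onto their images with $\{\phi_u\}\simeq U(e)$ and $\{\psi_v\}\simeq U_k(d)$, and that $\langle\theta\rangle$ has order exactly $g=e/z$ (the relation $\theta^g=\mathrm{Id}_M$ gives one direction; the reverse needs that $\theta$ has no smaller order, which follows by computing $\theta^j(y)=x^{jz}y$ and noting $jz\equiv0\pmod e$ forces $g\mid j$).

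The substantive content is the reverse inclusion: every $\pi\in\Aut(M)$ lies in the asserted product. Here I would argue by tracking where a generic automorphism sends the generators $x$ and $y$. Since $\langle x\rangle$ is the unique subgroup related to the ``base'' cyclic part (one should pin this down — typically $\langle x\rangle$ is characteristic, being for instance the commutator subgroup's natural complement or identifiable via the $C$-group structure using $\gcd(e,d)=1$), any $\pi$ must satisfy $\pi(x)=x^u$ for some $u\in U(e)$, so that composing with $\phi_u^{-1}$ we may assume $\pi(x)=x$. Then $\pi(y)$ must be an element of order dividing $d$ that conjugates $x$ to $x^k$; writing $\pi(y)=x^c y^w$ and imposing the relation $\pi(y)\pi(x)\pi(y)^{-1}=x^k$ pins down $w$ modulo the constraint defining $U_k(d)$, while the order condition $\pi(y)^d=1$ together with the computation $(x^cy^w)^d=x^{cS(k^w,d)}$ constrains $c$. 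Composing with a suitable $\psi_v$ reduces to $\pi(y)=x^cy$, and then the condition that $c$ is admissible should force $x^c\in\langle x^z\rangle$, i.e. $z\mid c$, so that $\pi=\theta^{c/z}$ lies in $\langle\theta\rangle$.

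The main obstacle I anticipate is the counting/compatibility argument that makes the reverse inclusion airtight, namely showing that the arithmetic constraints on $(c,w)$ imposed by preserving the relations $y^d=1$ and $yxy^{-1}=x^k$ are exactly matched by the admissible exponents for $\theta$ and $\psi_v$. In particular one must verify that the exponent $c$ of $x$ in $\pi(y)$ is forced to be a multiple of $z=\gcd(e,k-1)$, which is precisely what makes $\theta$ (rather than arbitrary powers of $x$) the correct generator; this uses the same computation $(k-1)/z$-divisibility trick that appears in the proof of Lemma \ref{Aut def}, specifically the congruence manipulation leading to (\ref{z mod e}). A clean way to close the argument is by comparing orders: having shown containment of the product in $\Aut(M)$ and that $\Aut(M)$ is contained in the product, I would confirm the product has order $g\cdot|U(e)|\cdot|U_k(d)|$ and check this equals $|\Aut(M)|$ by the generator-image count above, so that both inclusions yield equality.
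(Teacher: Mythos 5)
Your plan follows the paper's proof in all essentials: the relations (\ref{relations}) give the semidirect/direct product structure, and the substance is showing each $\pi\in\Aut(M)$ lies in the product by writing $\pi(x)=x^u$, $\pi(y)=x^cy^w$ and deriving the arithmetic constraints on $u$, $c$, $w$; your treatment of the constraint $z\mid c$ (via the congruence trick behind (\ref{z mod e})) is exactly the paper's. Incidentally, the fact you left to be pinned down --- that $\pi(x)=x^u$ --- holds because $\langle x\rangle$ is a normal subgroup of order $e$ with $\gcd(e,d)=1$, i.e.\ a normal Hall subgroup, hence the unique subgroup of its order and so characteristic; it is not a complement of the commutator subgroup (indeed $M'=\langle x^{z}\rangle\subseteq\langle x\rangle$).

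There is, however, one genuine gap: you never show that $\gcd(w,d)=1$. You need this twice --- once to conclude $w\in U_k(d)$ at all (membership in $U_k(d)$ means $w$ is a \emph{unit} mod $d$ satisfying $w\equiv 1\pmod{\mathrm{ord}_e(k)}$, whereas the relation $\pi(y)\pi(x)\pi(y)^{-1}=\pi(x)^k$ only yields the congruence), and once to ``compose with a suitable $\psi_v$'', since that requires $\psi_{w^{-1}}$ to exist. The constraints you actually impose (preservation of the three defining relations, $\pi(y)$ of order dividing $d$) do not force coprimality: for instance when $k=1$, so that $M=C_e\times C_d$ and $\mathrm{ord}_e(k)=1$, the assignment $x\mapsto x$, $y\mapsto y^w$ preserves all relations for every $w$, yet is an automorphism only when $\gcd(w,d)=1$. (Note also that at this point one may not assume $\mathrm{ord}_e(k)$ is divisible by every prime factor of $d$; that assumption is only imposed later, in Section \ref{N prop sec}, and it is precisely what would make the congruence imply coprimality.) The missing step must therefore use bijectivity of $\pi$, as the paper does: if $\gcd(w,d)>1$, choose $\ell<d$ with $d\mid w\ell$; then $\pi(y)^\ell=x^{cS(k^w,\ell)}$ lies in $\langle x\rangle$, but $\langle\pi(y)\rangle$ is cyclic of order $d$ and meets $\langle x\rangle$ trivially since $\gcd(e,d)=1$, so $\pi(y)^\ell=1$, contradicting $\ell<d=|\pi(y)|$. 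With this inserted your argument closes; your final order-counting step is then superfluous, since the two inclusions already give equality.
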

\begin{proof} It is easy to check that the three subgroups
\[\langle\theta\rangle,\,\ \{\phi_u\}_{u\in U(e)},\,\ \{\psi_v\}_{v\in U_k(d)}\]
have trivial pairwise intersections. By the relations in (\ref{relations}),  it is then enough to show that every $\pi \in \Aut(M)$ lies in their product.

\vspace{1.5mm}

First, since $\gcd(e,d)=1$, clearly
\[ \pi(x) = x^u \mbox{ with }u\in U(e),\mbox{ and let us write }\pi(y) = x^cy^v.\]
We must have $\gcd(v,d)=1$, for otherwise there would exist $\ell\in\mathbb{N}$ which is strictly less than $d$ such that $d$ divides $v\ell$, and
\[ \pi(y)^\ell = (x^cy^v)^\ell = x^{cS(k^v,\ell)}y^{v \ell } = x^{cS(k^v,\ell)}.\]
But $\langle \pi(y)\rangle$, which has order $d$, cannot contain a non-trivial element of order dividing $e$ because $\gcd(e,d)=1$. This then implies that $\pi(y)^\ell = 1$, which is impossible since $1\leq \ell \leq d-1$. Next, observe that
\[ x^{uk^v} = (x^cy^v)x^u(x^cy^v)^{-1}= \pi(y)\pi(x)\pi(y)^{-1} = \pi(x)^k = x^{uk}.\]
Since $\gcd(u,e)=1$, it follows that
\[ k^v\equiv k\pmmod{e},\mbox{ and hence } v\in U_k(d).\]
We also have the equalities
\[ 1 = \pi(y)^d = (x^cy^v)^d = x^{cS(k^v,d)}y^{dv} = x^{cS(k^v,d)}.\]
Recall that $z=\gcd(e,k-1)$. Then, the above in particular implies that
\[ cd\equiv cS(1,d)\equiv cS(k^v,d)\equiv0\pmmod{z},\]
and so $c$ is divisible by $z$ because $\gcd(z,d)=1$.

\vspace{1.5mm}

Finally, we compute that
\begin{align*}
(\theta^{\frac{c}{z}}\phi_u\psi_v)(x) &= (\theta^{\frac{c}{z}}\phi_u)(x) = \theta^{\frac{c}{z}}(x^u) = x^u, \\
(\theta^{\frac{c}{z}}\phi_u\psi_v)(y) &= (\theta^{\frac{c}{z}}\phi_u)(y^v) = \theta^{\frac{c}{z}}(y^v) = (x^{\frac{c}{z}\cdot z}y)^v = x^{\frac{c}{z}\cdot zS(k,v)}y^v = x^{c}y^v,
\end{align*}
where the last equality holds by the congruence in (\ref{z mod e}). It thus follows that $\pi = \theta^{\frac{c}{z}}\phi_u \psi_v$, and this completes the proof. 
\end{proof}

\subsection{Dihedral and generalized quaternion groups} Let us record a few  \\\par\noindent facts that we shall need concerning the commutator subgroup $P'$ of $P$, and also the automorphism group $\mathrm{Aut}(P)$ of $P$.

\begin{lem}\label{commutator lem}We have $P'=\langle r^2\rangle$ and $P/P'\simeq D_4$.
\end{lem}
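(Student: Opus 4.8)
The plan is to prove both claims by direct computation from the presentations of $D_{2^m}$ and $Q_{2^m}$ recorded earlier, treating the two families uniformly wherever possible. First I would establish that $r^2 \in P'$: from the defining relation $srs^{-1} = r^{-1}$ we get the commutator $[s,r] = srs^{-1}r^{-1} = r^{-2}$, so $r^{-2}$, and hence $r^2$, lies in $P'$. This gives the inclusion $\langle r^2\rangle \subseteq P'$.

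For the reverse inclusion, the key observation is that $\langle r^2\rangle$ is normal in $P$ (it is characteristic in $\langle r\rangle$, which is normal as the index-$2$ cyclic subgroup) and that the quotient $P/\langle r^2\rangle$ is abelian. To see the latter, I would pass to the quotient: modulo $\langle r^2\rangle$ the generator $r$ has order dividing $2$, and the relation $srs^{-1} = r^{-1}$ becomes $\bar s\,\bar r\,\bar s^{-1} = \bar r^{-1} = \bar r$ (since $\bar r$ is an involution or trivial), so $\bar r$ and $\bar s$ commute. In the quaternion case one must also note that $s^2 = r^{2^{m-2}} \in \langle r^2\rangle$ for $m\geq 3$, so $\bar s^2 = 1$ in the quotient as well. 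Hence $P/\langle r^2\rangle$ is generated by two commuting involutions (or fewer), forcing $P' \subseteq \langle r^2\rangle$. Combined with the first inclusion this yields $P' = \langle r^2\rangle$.

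It then remains to identify the quotient $P/P' = P/\langle r^2\rangle$ as $D_4$, the Klein four-group. Since $r^{2^{m-1}} = 1$, the image $\bar r$ generates $\langle r\rangle/\langle r^2\rangle$, which has order exactly $2$ (for $m\geq 2$, so that $r \notin \langle r^2\rangle$), and $\bar s$ has order $2$ as well by the computation above, with $\bar r, \bar s$ commuting and generating the quotient. A quick order count, $|P/P'| = 2^m / 2^{m-2} = 4$, confirms that the quotient has order $4$, and having exponent $2$ it must be the Klein four-group $D_4$. I would remark that the edge case $m=2$ for $D_{2^m}$ is consistent, since there $P = D_4$ is already the Klein four-group, $P' = \langle r^2\rangle = 1$ is trivial, and $P/P' \simeq D_4$ trivially.

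I do not anticipate a genuine obstacle here; the statement is elementary and follows from manipulating the presentations. The only point requiring mild care is handling the two group families and their low-$m$ boundary cases simultaneously, in particular verifying that $s^2$ lands inside $\langle r^2\rangle$ in the quaternion case so that the quotient argument goes through identically for both $D_{2^m}$ and $Q_{2^m}$.
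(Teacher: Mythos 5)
Your proof is correct and follows essentially the same route as the paper: extract $r^2\in P'$ from the commutator $srs^{-1}r^{-1}=r^{-2}$, observe that $P/\langle r^2\rangle$ has order $4$ and exponent $2$ (hence is the Klein four-group and in particular abelian), and conclude $P'=\langle r^2\rangle$. Your extra care with the quaternion relation $s^2=r^{2^{m-2}}\in\langle r^2\rangle$ and the $m=2$ edge case is sound but is exactly what the paper leaves as ``easily seen.''
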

\begin{proof}We have $r^2\in P'$ because $srs^{-1}r^{-1} = r^{-2}$, and clearly $\langle r^2\rangle$ is a normal subgroup of order $2^{m-2}$. Since $P/\langle r^2\rangle$ has order $4$, whose exponent is easily seen to be $2$, we must have $P/\langle r^2\rangle \simeq D_4$. The fact that $r^2\in P'$ and $P/\langle r^2\rangle$ is abelian implies that $P' =\langle r^2\rangle$.
\end{proof}

%The structure of $\Aut(D_4)$ and $\Aut(Q_8)$ are of course known, but  we shall only require the following. 

\begin{prop}\label{DQ prop}The following hold.
\begin{enumerate}[$(a)$]
\item The definitions
\[\begin{cases}\kappa_1(r) =s\\ \kappa_1(s) =r
\end{cases}\mbox{ and }\hspace{1em}\begin{cases}\kappa_2(r) =rs\\ \kappa_2(s) =s
\end{cases}\]
extend to automorphisms on $D_4$. 
\item The definitions
\[\begin{cases}\kappa_1(r) =s\\ \kappa_1(s) =rs^2
\end{cases}\mbox{ and }\hspace{1em}\begin{cases}\kappa_2(r) =rs\\ \kappa_2(s) = r
\end{cases}\]
extend to automorphisms on $Q_8$.
\item Assume that $P = D_{2^m}$ with $m\geq 3$ or $P= Q_{2^m}$ with $m\geq 4$. Then, for any $a,b\in\mathbb{Z}$ with $a$ odd, the definition
\[\begin{cases}
\kappa(r) = r^a\\
\kappa(s) = r^bs
\end{cases}\]
extends to an automorphism on $P$. Conversely, all automorphisms on $P$ arise in this way.
\end{enumerate}
%extend to elements of $\Aut(P)$.
\end{prop}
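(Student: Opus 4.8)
The plan is to treat the three parts separately, with (a) and (b) reducing to routine verification and (c) carrying the structural content.

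For parts (a) and (b), I would check in each case that the prescribed images of $r$ and $s$ satisfy the defining relations of $D_4$ (resp.\ $Q_8$), so that the map extends to an endomorphism, and then note that the images generate the group, whence the map is surjective and hence, by finiteness, an automorphism. The only mildly delicate computations occur in (b), where one must keep track of the relation $s^2 = r^2$; for instance the identity $rs^2 = r^3 = r^{-1}$ streamlines the verification of $\kappa_1(s)^2 = \kappa_1(r)^2$ and of the conjugation relation.

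For the forward direction of (c), I would again verify the defining relations. In the dihedral case all three are immediate: $\kappa(r)^{2^{m-1}} = 1$ is clear, $(r^b s)^2 = r^b (s r^b s^{-1}) s^2 = 1$ since conjugation by $s$ inverts $r$, and $\kappa(s)\kappa(r)\kappa(s)^{-1} = \kappa(r)^{-1}$ follows the same way. In the quaternion case the crucial relation is $s^2 = r^{2^{m-2}}$: here $(r^b s)^2 = s^2 = r^{2^{m-2}}$, and one needs this to equal $\kappa(r)^{2^{m-2}} = r^{a 2^{m-2}}$, i.e.\ $(a-1)2^{m-2}\equiv 0 \pmod{2^{m-1}}$, which holds precisely because $a$ is odd. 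This is exactly where the hypothesis on $a$ enters. Bijectivity then follows because $a$ odd gives $\gcd(a,2^{m-1})=1$, so $\langle\kappa(r)\rangle = \langle r\rangle$, and together with $\kappa(s)=r^b s$ the image contains both $r$ and $s$.

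The converse is the only real obstacle, and it rests on showing that $\langle r\rangle$ is a characteristic subgroup of $P$. I would prove that $\langle r\rangle$ is the unique cyclic subgroup of order $2^{m-1}$ by an order count: every element outside $\langle r\rangle$ has the form $r^i s$, which has order $2$ in the dihedral case and order $4$ in the quaternion case (using $(r^i s)^2 = r^{2^{m-2}}$), whereas $2^{m-1}>2$ when $m\geq 3$ and $2^{m-1}>4$ when $m\geq 4$. Hence every element of order $2^{m-1}$ lies in $\langle r\rangle$, so $\langle r\rangle$ is characteristic; note this is precisely what fails for $D_4$ and $Q_8$, accounting for their separate treatment in (a) and (b). Given this, any automorphism $\kappa$ restricts to an automorphism of $\langle r\rangle\cong C_{2^{m-1}}$, forcing $\kappa(r)=r^a$ with $a$ odd, while $\kappa(s)$ must lie outside $\langle r\rangle$ by bijectivity, so $\kappa(s)=r^b s$. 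This yields the claimed form, and no further relation-checking is needed since $\kappa$ is assumed to be an automorphism.
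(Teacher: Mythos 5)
Your proposal is correct, and it is worth noting that it is substantially more self-contained than the paper's own proof: the paper dismisses part (a) as obvious, part (b) as a simple calculation, and for part (c) — both directions — simply cites external references (Conrad's notes on dihedral groups and on generalized quaternions, \cite{Conrad D} and \cite{Conrad Q}), so the structural argument you supply does not appear in the paper at all. Your treatment of (a), (b), and the forward direction of (c) is the same routine relation-checking the paper has in mind, with the one genuinely non-trivial observation correctly isolated: in the quaternion case the relation $\kappa(s)^2=\kappa(r)^{2^{m-2}}$ forces $(a-1)2^{m-2}\equiv 0 \pmod{2^{m-1}}$, which is where oddness of $a$ is needed already at the endomorphism stage (in the dihedral case it is needed only for bijectivity). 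For the converse, your key lemma — that $\langle r\rangle$ is characteristic because every element $r^i s$ outside it has order $2$ (dihedral) or $4$ (quaternion), both strictly less than $2^{m-1}$ under the stated bounds on $m$ — is the standard argument and is exactly what the cited references use; it correctly forces $\kappa(r)=r^a$ with $a$ odd and $\kappa(s)=r^b s$. Your closing observation that this order count is precisely what breaks down for $D_4$ and $Q_8$ matches the paper's own remark immediately following the proposition, and explains why those two groups need the separate automorphisms listed in (a) and (b). In short: the paper's proof buys brevity by outsourcing (c); yours buys a complete argument at the cost of a page, with no gaps.
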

\begin{proof}Part (a) is obvious and part (b) follows from a simple calculation. As for part (c), see \cite[Theorem 1.4]{Conrad D} and \cite[Theorem 4.7]{Conrad Q}.
\end{proof}
%For $P=D_4$, the claim is obvious. For $P=Q_8$, it is routine to verify that $\kappa_3$ and $\kappa_4$ preserve the relations $r^4=1$, $s^2=r^2$, $srs^{-1}=r^{-1}$.\end{proof}

\begin{remark}In Proposition \ref{DQ prop}, the $\kappa_1,\kappa_2$ in (a) do not extend to automorphisms on $D_{2^m}$ for $m\geq 3$, and those in (b) do not extend to automorphisms on $Q_{2^m}$ for $m\geq 4$. This difference is the reason why there are two cases to consider in Theorem \ref{main thm}.
\end{remark}

\subsection{Properties of the homomorphism $\alpha$}\label{N prop sec}

Let $\alpha\in\mathrm{Hom}(P,\Aut(M))$ be fixed, and let $N = M\rtimes_\alpha P$ be the semidirect product defined by $\alpha$. For each $t\in P$, let us write $\alpha_t = \alpha(t)$ for short. Then, in the group $N$ we have
\[ t x t^{-1} = \alpha_t(x)\mbox{ and } tyt^{-1} = \alpha_t(y).\]
We shall study properties of $\alpha$ using results from the previous subsections.

%Below, we explain how to define $\beta \in \mathrm{Hom}(P,\Aut(M))$ such that $\beta$ satisfies some nice conditions and $N\simeq M\rtimes_\beta P$ holds.

\begin{assumption} We shall assume that the order $ed$ of $M$ is odd since this is the only case of interest for us. In the presentation of $M$, by \cite{MM} without loss of generality, we may assume that $\mbox{ord}_e(k)$, which has to divide $d$, is divisible by all prime factors of $d$. 
\end{assumption}

\begin{lem}\label{alpha lemma} The homomorphism $\alpha$ satisfies the following:
\begin{enumerate}[$(a)$]
\item $\alpha(P)$ lies in $\langle\theta\rangle\rtimes \{\phi_u\}_{u\in U(e)}$;
\item $\ker(\alpha)$ contains $P'$;
\item $\alpha(P)$ is elementary $2$-abelian of order $1,2$, or $4$;
\item $\alpha_{t_1}(x) = \alpha_{t_2}(x)$ implies $\alpha_{t_1}=\alpha_{t_2}$ for any $t_1,t_2\in P$.
\end{enumerate}
\end{lem}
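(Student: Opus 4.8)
The plan is to exploit the structure results from the two preceding subsections: the decomposition $\Aut(M) = (\langle\theta\rangle \rtimes \{\phi_u\}) \times \{\psi_v\}$ from Proposition \ref{Aut prop}, and the commutator/abelianization facts from Lemma \ref{commutator lem}. The four parts are logically cascading, so I would prove them in order, since each later part leans on the earlier ones.

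For part $(a)$, the key observation is an order/parity argument. Since $M$ has odd order $ed$, both the subgroup $\langle\theta\rangle \rtimes \{\phi_u\}$ and the factor $\{\psi_v\}_{v\in U_k(d)}$ sit inside $\Aut(M)$, but $\alpha(P)$ is a quotient of the $2$-group $P$ and hence is itself a $2$-group. The relevant point is that $\{\psi_v\}_{v\in U_k(d)}$ has odd order: by the Assumption, $\mathrm{ord}_e(k)$ is divisible by every prime factor of $d$, so the condition $v\equiv 1 \pmod{\mathrm{ord}_e(k)}$ forces $U_k(d)$ to contain no elements of even order in its image, and in fact $|U_k(d)|$ is odd. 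Thus the projection of the $2$-group $\alpha(P)$ onto the odd-order factor $\{\psi_v\}$ must be trivial, which places $\alpha(P)$ inside $\langle\theta\rangle \rtimes \{\phi_u\}_{u\in U(e)}$. I would want to double-check that $|U_k(d)|$ is genuinely odd under the Assumption; this is the one spot where the ``divisible by all prime factors of $d$'' hypothesis is doing real work, and it is the part I would verify most carefully.

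For part $(b)$, I would argue that $\alpha(P)$ is abelian, whence $P' \subseteq \ker(\alpha)$ by the universal property of abelianization (using $P/P' \simeq D_4$ from Lemma \ref{commutator lem}). To see $\alpha(P)$ is abelian: by part $(a)$ it lands in $H := \langle\theta\rangle \rtimes \{\phi_u\}_{u\in U(e)}$, and $\alpha(P)$ is a $2$-group inside $H$. Here $\langle\theta\rangle$ has order $g = e/z$, which is odd since $e$ is odd, so a $2$-subgroup of $H$ must project trivially to $\langle\theta\rangle$ and therefore lie in the abelian group $\{\phi_u\}_{u\in U(e)} \simeq U(e)$. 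Hence $\alpha(P)$ is abelian, giving $(b)$. For part $(c)$, once we know $\alpha(P) \subseteq \{\phi_u\}_{u\in U(e)}$, it is a $2$-subgroup of the abelian group $U(e) = (\mathbb{Z}/e\mathbb{Z})^\times$; since $e$ is odd and $P'=\langle r^2\rangle$ lies in the kernel, $\alpha(P)$ is a homomorphic image of $P/P' \simeq D_4$, which is elementary abelian of exponent $2$. Therefore $\alpha(P)$ is elementary $2$-abelian of order $1$, $2$, or $4$.

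Finally, for part $(d)$, since $(b)$ and $(c)$ show $\alpha_t = \phi_{u(t)}$ for some $u(t) \in U(e)$, we have $\alpha_t(x) = x^{u(t)}$ and $\alpha_t(y) = y$ for every $t$. Thus $\alpha_{t_1}(x) = \alpha_{t_2}(x)$ reads $x^{u(t_1)} = x^{u(t_2)}$, i.e. $u(t_1) \equiv u(t_2) \pmod{e}$; but then $\alpha_{t_1} = \phi_{u(t_1)} = \phi_{u(t_2)} = \alpha_{t_2}$ since $\phi_u$ is determined by its action on $x$ (it fixes $y$). The main obstacle in the whole argument is really the parity bookkeeping in part $(a)$: confirming that the Assumption forces $|U_k(d)|$ to be odd so that the $2$-group $\alpha(P)$ cannot touch the $\psi_v$ factor. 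Once that is secured, parts $(b)$--$(d)$ follow essentially formally from oddness of $e$ and the explicit description of the $\phi_u$.
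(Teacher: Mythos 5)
The conclusions of your parts (a)--(c) are salvageable, but your argument contains a false step in (b) which then sinks part (d). In (b) you assert that, because $\langle\theta\rangle$ has odd order, a $2$-subgroup of $\langle\theta\rangle\rtimes\{\phi_u\}_{u\in U(e)}$ ``must project trivially to $\langle\theta\rangle$ and therefore lie in $\{\phi_u\}_{u\in U(e)}$.'' That reasoning is only valid for a \emph{direct} product, as in part (a), where the projection onto $\{\psi_v\}_{v\in U_k(d)}$ really is a homomorphism. Here the product is semidirect: the map $\theta^c\phi_u\mapsto\theta^c$ is not a homomorphism, and $2$-subgroups need not be contained in $\{\phi_u\}_{u\in U(e)}$. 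Concretely, from the relation $\phi_u\theta\phi_u^{-1}=\theta^u$ in (\ref{relations}) one computes $(\theta^c\phi_u)^2=\theta^{c(1+u)}\phi_{u^2}$, so for $u\equiv-1\pmod{e}$ the element $\theta^c\phi_{-1}$ is an involution for every $c$. Taking $M$ non-abelian (so that $\theta\neq\mathrm{Id}_M$, e.g.\ $M=C_7\rtimes C_3$ with $k=2$), the assignments $\alpha_r=\mathrm{Id}_M$, $\alpha_s=\theta\phi_{-1}$ define a legitimate $\alpha\in\Hom(P,\Aut(M))$ whose image is \emph{not} inside $\{\phi_u\}_{u\in U(e)}$. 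For (b) and (c) this is easily repaired: since $\alpha(P)\cap\langle\theta\rangle=1$ by coprimality of orders, $\alpha(P)$ embeds in the quotient $\bigl(\langle\theta\rangle\rtimes\{\phi_u\}_{u\in U(e)}\bigr)/\langle\theta\rangle\simeq U(e)$, hence is abelian, so $P'\subseteq\ker(\alpha)$ and $\alpha(P)$ is a quotient of $P/P'\simeq D_4$. (The paper argues instead that $\{\phi_u\}_{u\in U(e)}$ contains a Sylow $2$-subgroup of $\Aut(M)$, so every $2$-subgroup is conjugate into it and therefore abelian.)

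Part (d), however, genuinely breaks: your proof assumes $\alpha_t=\phi_{u(t)}$, i.e.\ $\alpha_t(y)=y$, for all $t\in P$, and in the example above $\alpha_s(y)=\theta(y)=x^zy\neq y$. All that part (a) actually gives you is $\alpha_{t_i}=\theta^{c_i}\phi_{u_i}$; the hypothesis $\alpha_{t_1}(x)=\alpha_{t_2}(x)$ then yields $\phi_{u_1}=\phi_{u_2}$, but nothing in your argument forces $\theta^{c_1}=\theta^{c_2}$. The paper supplies precisely this missing step: by (c) the automorphisms $\alpha_{t_1},\alpha_{t_2}$ commute and have order dividing $2$, so $\alpha_{t_1}\alpha_{t_2}^{-1}=\theta^{c_1-c_2}$ also has order dividing $2$; since $\theta$ has odd order, this forces $\theta^{c_1}=\theta^{c_2}$, whence $\alpha_{t_1}=\alpha_{t_2}$. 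You need this (or an equivalent) argument for (d) to be complete.
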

\begin{proof}Since $\mathrm{ord}_e(k)$ is divisible by all prime factors of $d$, the order of $U_k(d)$ divides $d$ and hence is odd. Since $P$ is a $2$-group, the projection of $\alpha(P)$ onto $\{\psi_v\}_{v\in U_k(d)}\simeq U_k(d)$ must then be trivial. This gives (a).

\vspace{1.5mm}

The order of $\langle\theta\rangle$ divides $e$ by (\ref{relations}) and thus is also odd. This means that $\{\phi_u\}_{u\in U(e)}$ contains a Sylow $2$-subgroup of $\Aut(M)$. Since $\{\phi_u\}_{u\in U(e)}\simeq U(e)$ is abelian, this implies that $\alpha(P)$ must be abelian. This proves (b), and (c) follows as well by Lemma \ref{commutator lem}.

\vspace{1.5mm}

Let $t_1,t_2\in P$ be such that $\alpha_{t_1}(x) = \alpha_{t_2}(x)$. By (a), we may write
\[ \alpha_{t_1} = \theta^{c_1}\phi_{u_1}\mbox{ and }\alpha_{t_2}=\theta^{c_2}\phi_{u_2},\mbox{ where }c_1,c_2\in\mathbb{Z},\, u_1,u_2\in U(e).\]
That $\alpha_{t_1}(x) = \alpha_{t_2}(x)$ means $x^{u_1} = x^{u_2}$ and hence $\phi_{u_1} = \phi_{u_2}$. By (c), we know that $\alpha_{t_1},\alpha_{t_2}$ have order dividing $2$ and they commute. It follows that
\[ \alpha_{t_1}\cdot \alpha_{t_2}^{-1} = \theta^{c_1}\phi_{u_1}\cdot \phi_{u_2}^{-1}\theta^{-c_2} = \theta^{c_1-c_2}\]
also has order dividing $2$. But $\theta$ has odd order, so we have $\theta^{c_1} = \theta^{c_2}$. Thus, indeed $\alpha_{t_1} = \alpha_{t_2}$, and this proves (d).
\end{proof}

Before proceeding, let us make two observations. First, recall that $P' = \langle r^2\rangle$ by Lemma \ref{commutator lem}, and that $\ker(\alpha)$ contains $P'$ by Lemma \ref{alpha lemma}(b). It then follows that $\ker(\alpha)$ is equal to one of the following:
\begin{equation}\label{ker poss}
 \langle r^2\rangle ,\,\ \langle r^2,s\rangle,\,\ \langle r^2,rs\rangle,\,\ \langle r\rangle,\,\ \langle r,s\rangle.
 \end{equation}
For these five possibilities, the order of $\alpha(P)$ is respectively given by
\[ 4,\, 2,\, 2,\, 2,\, 1.\]
Second, notice that $M$, whose order is assumed to be odd, is a characteristic subgroup of $N$. Then $\langle x\rangle$, being characteristic in $M$ because $\gcd(e,d)=1$, is also a characteristic and in particular normal subgroup of $N$.

\begin{lem}\label{order 2 lem}Elements in $N$ of order a power of $2$ all lie in $\langle x\rangle \rtimes_\alpha P$.\end{lem}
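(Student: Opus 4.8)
The plan is to pass to the quotient $N/\langle x\rangle$ and exploit that its normal $2'$-part becomes central. Recall from the discussion preceding the lemma that $\langle x\rangle$ is a normal (indeed characteristic) subgroup of $N$, so I may form $\overline N = N/\langle x\rangle$; I write $\pi\colon N\to \overline N$ for the projection and denote images by bars. Since $M$ is normal in $N$ and $\langle x\rangle\le M$, the image $\overline M = M/\langle x\rangle = \langle \overline y\rangle$ is a normal subgroup of $\overline N$ of odd order $d$, and $\overline N = \overline M\rtimes_{\overline\alpha} P$, where $\overline\alpha$ is the action of $P$ on $\overline M$ induced by $\alpha$ (here $P$ embeds into $\overline N$ because $P\cap M = 1$).

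The key step is to show that the induced action $\overline\alpha$ is \emph{trivial}, so that in fact $\overline N\simeq C_d\times P$. By Lemma \ref{alpha lemma}(a) every $\alpha_t$ has the form $\theta^c\phi_u$. Since $\phi_u(y) = y$ and $\theta(y) = x^z y\equiv y \pmod{\langle x\rangle}$ (and $\langle x\rangle$ is $\theta$-invariant, as $\theta(x) = x$, so $\theta^c(y)\equiv y$ for all $c$), I obtain $\alpha_t(y)\equiv y \pmod{\langle x\rangle}$ for every $t\in P$. Hence $P$ centralizes $\overline y$ in $\overline N$, giving $\overline N = \langle \overline y\rangle\times \pi(P)\simeq C_d\times P$. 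Under this identification the preimage $\pi^{-1}(\pi(P))$ is precisely $\langle x\rangle\rtimes_\alpha P$: an arbitrary element $x^i y^j t$ with $t\in P$ projects to $\overline y^{\,j}\,\overline t$, which lies in the factor $\pi(P)$ if and only if $\overline y^{\,j} = 1$, i.e. if and only if $d\mid j$, i.e. $y^j = 1$.

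Finally, let $g\in N$ have order a power of $2$. Then $\pi(g)$ has $2$-power order in $C_d\times P$, so its projection onto the factor $C_d$ is an element of odd order that also has $2$-power order, hence is trivial. Therefore $\pi(g)$ lies in the factor $\pi(P)$, that is $g\in \pi^{-1}(\pi(P)) = \langle x\rangle\rtimes_\alpha P$, as claimed.

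I expect the only real content to be the triviality of $\overline\alpha$, since this is what forces $\overline N$ to be a genuine direct product rather than a semidirect product. This point deserves emphasis because in a nonabelian semidirect product a $2$-element can have nontrivial projection onto the odd-order normal factor; it is the direct-product structure, not merely the existence of a normal Hall $2'$-subgroup, that makes the odd-order component of a $2$-element vanish. All remaining steps are routine bookkeeping once $\overline\alpha$ is identified as trivial.
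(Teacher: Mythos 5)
Your proof is correct and is essentially the paper's own argument in more structural clothing: the paper likewise invokes Lemma \ref{alpha lemma}(a) to get $\alpha_t(y)\equiv y\pmod{\langle x\rangle}$ for all $t\in P$, i.e.\ precisely your key step that $P$ acts trivially on $N/\langle x\rangle$, and then eliminates the $\langle y\rangle$-component of a $2$-element using the oddness of the order of $y$. The only cosmetic difference is that the paper computes $(x^iy^jt)^{2^\ell}\equiv y^{2^\ell j}t^{2^\ell}\equiv 1\pmod{\langle x\rangle}$ directly on a given element instead of naming the quotient as $C_d\times P$ and projecting onto its factors.
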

\begin{proof}
Let $x^iy^jt\in N$ be of order $2^\ell$ with $t\in P$. By Lemma \ref{alpha lemma}(a), we have
 \[ \alpha_t(y) \equiv y\pmmod{\langle x\rangle},\]
so then $y$ and $t$ commute modulo $\langle x\rangle$. It follows that
\[ y^{2^\ell j} t^{2^\ell}  \equiv (y^jt)^{2^\ell} \equiv (x^iy^jt)^{2^\ell} \equiv 1\pmmod{\langle x\rangle}.\]
But then $y^{2^\ell j} =1$, which implies that $y^j = 1$ because $y$ has odd order. Thus, indeed $x^iy^jt = x^it$ belongs to $\langle x\rangle\rtimes_\alpha P$.
\end{proof}

To prove necessity in Theorem \ref{main thm}, consider the natural homomorphism
\begin{equation}\label{mod M}
\begin{tikzcd}[column sep = 3.5cm]
\Aut(N)\arrow{r}{\xi\mapsto (\eta M\mapsto \xi(\eta)M)} &\Aut(N/M)\arrow[equal]{r}{\text{identification}} & \Aut(P).
\end{tikzcd}
\end{equation}
We shall require the next proposition.

\begin{prop}\label{P' prop}Let $\kappa\in\Aut(P)$ be in the image of (\ref{mod M}).
\begin{enumerate}[$(a)$]
\item We always have 
\[ \begin{cases}
\kappa(r)\equiv r\pmmod{\ker(\alpha)},\\
\kappa(s) \equiv s\pmmod{\ker(\alpha)}.
\end{cases}\]
%\[ \kappa(r)\equiv r\pmmod{\ker(\alpha)}\mbox{ and }\kappa(s) \equiv s\pmmod{\ker(\alpha)}.\]
\item Assume that $P=D_{2^m}$ with $m\geq 3$ or $P=Q_{2^m}$ with $m\geq 4$. Then
\[ \begin{cases}
\kappa(r)\equiv r\pmmod{P'},\\
\kappa(s) \equiv s\pmmod{P'},
\end{cases}\]
provided that $\alpha_r\neq\mathrm{Id}_M$.
% then
%\[ \kappa(r)\equiv r\pmmod{P')}\mbox{ and }\kappa(s) \equiv s\pmmod{\ker(P')}.\]
\end{enumerate}
\end{prop}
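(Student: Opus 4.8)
The crux is part (a); part (b) will then follow by combining it with the explicit description of $\Aut(P)$ in Proposition \ref{DQ prop}(c). For (a), the starting point is that $M$ is characteristic in $N$, so any $\xi\in\Aut(N)$ inducing $\kappa$ via (\ref{mod M}) restricts to an automorphism $\beta=\xi|_M\in\Aut(M)$, and for each $t\in P$ we may write $\xi(t)=m_t\,\kappa(t)$ with $m_t\in M$, where $\kappa(t)$ is read inside $N$ via the splitting $N=M\rtimes_\alpha P$. I would introduce the conjugation homomorphism $\Phi:N\to\Aut(M)$, $\Phi(\eta)=(\eta'\mapsto\eta\eta'\eta^{-1})$, which satisfies $\Phi(t)=\alpha_t$ for $t\in P$ and $\Phi(m)\in\Inn(M)$ for $m\in M$. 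Applying $\xi$ to a conjugation relation and using $\xi|_M=\beta$ yields $\Phi(\xi(\eta))=\beta\,\Phi(\eta)\,\beta^{-1}$ for all $\eta\in N$. Taking $\eta=t\in P$ and expanding $\Phi(\xi(t))=\Phi(m_t)\,\alpha_{\kappa(t)}$ gives the key identity $\Phi(m_t)\,\alpha_{\kappa(t)}=\beta\,\alpha_t\,\beta^{-1}$.

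The plan is to reduce this identity modulo $\Inn(M)$. A direct computation gives $\conj(x)=\theta^{-(k-1)/z}$ and $\conj(y)=\phi_k$; since $(k-1)/z$ is coprime to the order $g$ of $\theta$, these generate $\Inn(M)=\langle\theta\rangle\rtimes\langle\phi_k\rangle$. By Proposition \ref{Aut prop} the quotient $\Aut(M)/\Inn(M)$ is then isomorphic to $(U(e)/\langle k\rangle)\times\{\psi_v\}_{v\in U_k(d)}$, and in particular is \emph{abelian}. Reducing the key identity modulo $\Inn(M)$ therefore kills $\Phi(m_t)$ and the conjugation by $\beta$, leaving $\overline{\alpha_{\kappa(t)}}=\overline{\alpha_t}$. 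Next I would observe that the homomorphism $\bar\alpha:P\to\Aut(M)/\Inn(M)$, $t\mapsto\overline{\alpha_t}$, has kernel exactly $\ker(\alpha)$: since $|\Inn(M)|$ divides the odd number $|M|=ed$ while $\alpha(P)$ is a $2$-group by Lemma \ref{alpha lemma}(c), we get $\alpha(P)\cap\Inn(M)=1$, so $\alpha_t\in\Inn(M)\iff\alpha_t=\mathrm{Id}_M$. Hence $\overline{\alpha_{\kappa(t)}}=\overline{\alpha_t}$ forces $\kappa(t)t^{-1}\in\ker(\alpha)$, i.e.\ $\kappa(t)\equiv t\pmod{\ker(\alpha)}$; applying this to $t=r$ and $t=s$ proves (a).

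For part (b), I assume $P=D_{2^m}$ with $m\geq3$ or $P=Q_{2^m}$ with $m\geq4$, so by Proposition \ref{DQ prop}(c) every $\kappa\in\Aut(P)$ has the form $\kappa(r)=r^a$ with $a$ odd and $\kappa(s)=r^bs$. Because $a$ is odd, $\kappa(r)r^{-1}=r^{a-1}\in\langle r^2\rangle=P'$ automatically, giving $\kappa(r)\equiv r\pmod{P'}$ with no extra hypothesis. For $s$, part (a) gives $r^bs=\kappa(s)\equiv s\pmod{\ker(\alpha)}$, hence $r^b\in\ker(\alpha)$. The hypothesis $\alpha_r\neq\mathrm{Id}_M$ says $r\notin\ker(\alpha)$, which by the list (\ref{ker poss}) forces $\ker(\alpha)\in\{\langle r^2\rangle,\langle r^2,s\rangle,\langle r^2,rs\rangle\}$; in each of these $\ker(\alpha)\cap\langle r\rangle=\langle r^2\rangle$. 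As $r^b\in\langle r\rangle$, we conclude $r^b\in\langle r^2\rangle=P'$, i.e.\ $b$ is even and $\kappa(s)\equiv s\pmod{P'}$.

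I expect the main obstacle to be the abelianness of $\Aut(M)/\Inn(M)$: it is exactly the triviality of conjugation by the \emph{unknown} automorphism $\overline{\beta}$ on $\overline{\alpha_t}$ that lets me eliminate $\beta$ altogether, and this rests on pinning down $\Inn(M)=\langle\theta\rangle\rtimes\langle\phi_k\rangle$, which in turn uses that $(k-1)/z$ is coprime to $g$. Once this identification and the order-coprimality $\alpha(P)\cap\Inn(M)=1$ are secured, the remainder of both parts is bookkeeping with the presentation of $P$ and the kernel list (\ref{ker poss}).
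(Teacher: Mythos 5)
Your proposal is correct, and while part (b) coincides with the paper's argument, your part (a) takes a genuinely different route. The paper localizes everything at the characteristic subgroup $\langle x\rangle$: it uses Lemma \ref{order 2 lem} to write $\xi(r)=x^{i_1}\kappa(r)$ and $\xi(s)=x^{i_2}\kappa(s)$, applies $\xi$ to the relations $rxr^{-1}=\alpha_r(x)$ and $sxs^{-1}=\alpha_s(x)$ to obtain $\alpha_{\kappa(t)}(x^u)=\alpha_t(x^u)$ where $\xi(x)=x^u$, and then upgrades agreement on $x$ to equality of automorphisms via Lemma \ref{alpha lemma}(d). You instead retain the full conjugation homomorphism $\Phi\colon N\to\Aut(M)$ and eliminate the unknown restriction $\beta=\xi|_M$ by passing to $\Aut(M)/\Inn(M)$. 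This hinges on your supplementary computation $\conj(x)=\theta^{-(k-1)/z}$ and $\conj(y)=\phi_k$, hence $\Inn(M)=\langle\theta\rangle\rtimes\langle\phi_k\rangle$, which is correct: $\gcd\bigl((k-1)/z,\,e/z\bigr)=1$ and $\theta$ has order exactly $e/z$, so $\conj(x)$ generates $\langle\theta\rangle$; since $\langle\theta\rangle\subseteq\Inn(M)$, Proposition \ref{Aut prop} makes the outer automorphism group abelian, killing both $\Phi(m_t)$ and conjugation by $\beta$. Your descent from $\Inn(M)$ back to $\ker(\alpha)$ is also sound: $\alpha(P)$ is a $2$-group while $|\Inn(M)|$ divides the odd number $|M|$, so $\alpha(P)\cap\Inn(M)=1$ and $\ker(\bar\alpha)=\ker(\alpha)$. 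As for what each approach buys: the paper's route reuses Lemmas \ref{order 2 lem} and \ref{alpha lemma}(d), which are already on hand, and never needs to identify $\Inn(M)$; your route bypasses both of those lemmas (you only need $\xi(t)\in M\kappa(t)$, which follows directly from $\xi$ inducing $\kappa$ on $N/M$, rather than the finer statement of Lemma \ref{order 2 lem}) and is more structural, the point being that the outer action of $P$ on $M$ is insensitive to the unknown automorphism $\beta$ and to the coset representatives $m_t$. Both arguments ultimately rest on Proposition \ref{Aut prop} and the oddness of $|M|$.
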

\begin{proof}[Proof of (a)] By Lemma \ref{alpha lemma}(d), it suffices to show that 
\begin{equation}\label{alpha eqn}
\alpha_{\kappa(r)}(x) =  \alpha_r(x)\mbox{ and }\alpha_{\kappa(s)}(x) = \alpha_s(x).\end{equation}
Let $\xi \in \Aut(N)$ be such that its image under (\ref{mod M}) equals $\kappa$. Since $\xi(P)$ lies in $\langle x\rangle\rtimes_\alpha P$ by Lemma \ref{order 2 lem}, we may write
\[ \xi(r) = x^{i_1}\kappa(r)\mbox{ and }\xi(s) = x^{i_2}\kappa(s).\]
Since $\langle x\rangle$ is characteristic in both $M$ and $N$, we also have
\[\alpha_t(x)\in\langle x\rangle\mbox{ for all $t\in P$ and }\xi(x) = x^u\mbox{ for some }u\in U(e).\]
Now, applying $\xi$ to the relation $r x r^{-1} = \alpha_r(x)$ yields
\[x^{i_1}\kappa(r)\cdot x^u \cdot \kappa(r)^{-1}x^{-i_1} = \alpha_r(x)^u\mbox{ and so }\alpha_{\kappa(r)}(x^u) = \alpha_r(x^u).\]
Similarly, applying $\xi$ to the relation $s x s^{-1} = \alpha_s(x)$ yields
\[x^{i_2}\kappa(s)\cdot x^u \cdot \kappa(s)^{-1}x^{-i_2} = \alpha_s(x)^u\mbox{ and so }\alpha_{\kappa(s)}(x^u) = \alpha_s(x^u).\]
Since $\gcd(u,e)=1$, it follows that (\ref{alpha eqn}) indeed holds, as desired.
\end{proof}
\begin{proof}[Proof of (b)] Since $P=D_{2^m}$ with $m\geq 3$ or $P=Q_{2^m}$ with $m\geq 4$, we know from Proposition \ref{DQ prop}(c) that\[\kappa(r) = r^a\mbox{ and }\kappa(s) = r^bs\]
%\[ \begin{cases}
%\kappa(r) = r^a\\
%\kappa(s) = r^bs
%\end{cases}\]
with $a$ odd. We then have $\kappa(r)r^{-1} \in P'$ because $a-1$ is even. We also have $\kappa(s)s^{-1} \in \ker(\alpha)$ by (a).
%\[ \kappa(r)\equiv r\pmmod{P'}\mbox{ and }\kappa(s)\equiv s\pmmod{\ker(\alpha)},\]
%where the former is because $a-1$ is even while the latter is from (a).  
In the case that $\alpha_r\neq\mathrm{Id}_M$, the last two possibilities in (\ref{ker poss}) are ruled out. Thus, for $\kappa(s)s^{-1}$ to lie in $\ker(\alpha)$, necessarily $b$ is even, which means that $\kappa(s)s^{-1}\in P'$ as well. This completes the proof.
\end{proof}

To prove sufficiency in Theorem \ref{main thm}, we first show that $\alpha$ may be modified to satisfy certain nice conditions. %without changing the isomorphism class of the semidirect product $N = M\rtimes_\alpha P$.

%Our main result Theorem \ref{main thm}, to be proven in the next section, provides a criterion for $(C_n,N)$ to be realizable in terms of $\alpha$. The difference between the two cases (1) and (2) there is due to the following lemma.

\begin{prop}\label{iso prop}The following hold.
\begin{enumerate}[(a)]
\item  
Assume that $P=D_4$ or $P=Q_8$, and $\alpha(P)$ has order $1$ or $2$. Then there exists $\beta\in\Hom(P,\Aut(M))$ with $\beta_r =\mathrm{Id}_M$ such that $N\simeq M\rtimes_\beta P$.
\item There always exists $\beta \in\Hom(P,\Aut(M))$ with $\beta_s\in \{\phi_u\}_{u\in U(e)}$ such that $\alpha_t,\beta_t$ are conjugates in $\Aut(M)$ for all $t\in P$ and $N\simeq M\rtimes_{\beta} P$.
\end{enumerate}
\end{prop}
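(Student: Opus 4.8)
The plan rests on two standard ways of replacing $\alpha$ by a homomorphism having an isomorphic associated semidirect product. First, for any $\kappa\in\Aut(P)$ the map $(\eta,t)\mapsto(\eta,\kappa(t))$ is an isomorphism $M\rtimes_{\alpha\circ\kappa}P\longrightarrow M\rtimes_\alpha P$, so precomposing $\alpha$ with an automorphism of $P$ leaves $N$ unchanged up to isomorphism. Second, for any $\gamma\in\Aut(M)$ the map $(\eta,t)\mapsto(\gamma(\eta),t)$ is an isomorphism $M\rtimes_\alpha P\longrightarrow M\rtimes_\beta P$, where $\beta_t=\gamma\alpha_t\gamma^{-1}$; here $\alpha_t$ and $\beta_t$ are conjugate in $\Aut(M)$ for every $t\in P$. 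I would verify both by a one-line check on the multiplication rule of the semidirect product, then apply the first tool for (a) and the second for (b).

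For (a), when $\alpha(P)$ has order $1$ the homomorphism $\alpha$ is trivial and $\beta=\alpha$ works. When $\alpha(P)$ has order $2$, by Lemma \ref{alpha lemma}(b) and the list (\ref{ker poss}) the subgroup $\ker(\alpha)$ is one of the three index-$2$ subgroups of $P$ containing $P'$, and by Lemma \ref{commutator lem} these correspond exactly to the three index-$2$ subgroups of $P/P'\simeq D_4$. I would use the explicit automorphisms $\kappa_1,\kappa_2$ of Proposition \ref{DQ prop}(a) (for $P=D_4$) and (b) (for $P=Q_8$) to produce a $\kappa\in\Aut(P)$ with $\kappa(r)\in\ker(\alpha)$: concretely $\kappa_1$ sends $r$ to $s$ and $\kappa_2$ sends $r$ to $rs$, which, together with the identity, reach every such kernel. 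Then $\beta=\alpha\circ\kappa$ satisfies $\beta_r=\alpha_{\kappa(r)}=\mathrm{Id}_M$ and $N\simeq M\rtimes_\beta P$ by the first tool.

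For (b), by Lemma \ref{alpha lemma}(a) I may write $\alpha_s=\theta^c\phi_u$ for some $c\in\mathbb{Z}$ and $u\in U(e)$, and by Lemma \ref{alpha lemma}(c) this element is an involution, so $u^2\equiv1\pmod e$ and $\theta^{c(1+u)}=\mathrm{Id}_M$. The goal is to conjugate $\alpha_s$ into $\{\phi_u\}_{u\in U(e)}$ by a power of $\theta$. Using $\phi_u\theta\phi_u^{-1}=\theta^u$ from (\ref{relations}), conjugation by $\theta^j$ sends $\theta^c\phi_u$ to $\theta^{c+j(1-u)}\phi_u$, so it suffices to solve $c+j(1-u)\equiv0$ modulo the order $N_0$ of $\theta$, which is possible precisely when $\gcd(1-u,N_0)\mid c$. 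I would establish this divisibility one prime at a time, writing $v_p$ for the $p$-adic valuation: since $N_0$ is odd (it divides $g=e/z$ by (\ref{relations})), for each prime $p\mid N_0$ the group $(\mathbb{Z}/p^{v_p(N_0)}\mathbb{Z})^\times$ is cyclic and $u^2\equiv1$ forces $u\equiv\pm1$; in the case $u\equiv1$ the unit $1+u\equiv2$ together with $c(1+u)\equiv0$ forces $p^{v_p(N_0)}\mid c$, while in the case $u\equiv-1$ one has $p\nmid(1-u)$ so the condition at $p$ is vacuous. Taking $\gamma=\theta^j$ for a solution $j$, the homomorphism $\beta_t=\gamma\alpha_t\gamma^{-1}$ then has $\beta_s=\phi_u\in\{\phi_u\}_{u\in U(e)}$, has each $\beta_t$ conjugate to $\alpha_t$, and gives $N\simeq M\rtimes_\beta P$ by the second tool.

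The main obstacle is the conjugacy claim underlying (b), namely that the involution $\theta^c\phi_u$ is conjugate to $\phi_u$ in $\Aut(M)$. Everything hinges on the odd order of $\theta$ together with $u^2\equiv1\pmod e$, which makes the prime-by-prime reduction go through; were $N_0$ even, the unit $2$ would fail to be invertible and the argument would break, so this is precisely the point at which the hypothesis that $M$ has odd order is used.
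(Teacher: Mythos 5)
Your proposal is correct, and part (a) coincides with the paper's own proof: both pick $\epsilon\in\{r,s,rs\}$ with $\alpha_\epsilon=\mathrm{Id}_M$ (possible by the list (\ref{ker poss})), invoke Proposition \ref{DQ prop}(a),(b) to get $\kappa\in\Aut(P)$ with $\kappa(r)=\epsilon$, and pass to $\beta=\alpha\circ\kappa$ via the standard isomorphism $M\rtimes_{\alpha\circ\kappa}P\simeq M\rtimes_\alpha P$. In part (b), however, you diverge from the paper at the decisive step, namely producing the conjugator. The paper's proof is Sylow-theoretic: since $\langle\theta\rangle$ and $\{\psi_v\}_{v\in U_k(d)}$ have odd order, the subgroup $\{\phi_u\}_{u\in U(e)}$ contains a Sylow $2$-subgroup of $\Aut(M)$, and since $\alpha_s$ is a $2$-element by Lemma \ref{alpha lemma}(c), Sylow's theorem at once yields $\pi\in\Aut(M)$ with $\pi\alpha_s\pi^{-1}\in\{\phi_u\}_{u\in U(e)}$. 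You instead compute inside $\langle\theta\rangle\rtimes\{\phi_u\}_{u\in U(e)}$: writing $\alpha_s=\theta^c\phi_u$, using $\phi_u\theta\phi_u^{-1}=\theta^u$ from (\ref{relations}) to see that conjugation by $\theta^j$ shifts the exponent $c$ to $c+j(1-u)$, and solving the linear congruence $c+j(1-u)\equiv 0$ modulo $|\theta|$ prime by prime, via $u^2\equiv 1$, $\theta^{c(1+u)}=\mathrm{Id}_M$, and the cyclicity of $(\mathbb{Z}/p^{a}\mathbb{Z})^\times$ for odd $p$. Both arguments are valid, and both use the oddness of $|M|$ at the same conceptual point (for you, the invertibility of $2$ modulo $|\theta|$; for the paper, the fact that the Sylow $2$-subgroup sits inside $\{\phi_u\}_{u\in U(e)}$). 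The paper's route is shorter and conceptually cleaner; yours is more elementary, avoiding Sylow's theorem entirely, and yields the slightly finer conclusion that the conjugator can be chosen in $\langle\theta\rangle$, at the cost of a valuation-by-valuation analysis. Two small steps in your write-up deserve explicit justification, both readily available from Section \ref{Aut sec}: extracting $u^2\equiv1\pmod{e}$ and $\theta^{c(1+u)}=\mathrm{Id}_M$ from $\alpha_s^2=\mathrm{Id}_M$ uses that $\langle\theta\rangle\cap\{\phi_u\}_{u\in U(e)}$ is trivial (noted in the proof of Proposition \ref{Aut prop}), and reducing $u^2\equiv1$ from modulus $e$ to modulus $p^{v_p(|\theta|)}$ uses $|\theta|\mid g\mid e$.
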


\begin{proof}[Proof of (a)] Since $\alpha(P)$ has order $1$ or $2$, from (\ref{ker poss}) we see that
\[\alpha_\epsilon=\mathrm{Id}_M\mbox{ for at least one }\epsilon \in \{ r,s,rs\}.\]
Since $P=D_4$ or $P=Q_8$, by Proposition \ref{DQ prop}(a),(b), there exists $\kappa\in\Aut(P)$ such that $\kappa(r) = \epsilon$. Let us take
\[ \beta \in \Hom(P,\Aut(M));\hspace{1em}\beta(t) = \alpha(\kappa(t)).\]
Then clearly $\beta_{r} = \alpha_\epsilon =  \mathrm{Id}_M$. To show that $N\simeq M\rtimes_\beta P$, define
\[\begin{cases}
\xi(\eta) = \eta & \mbox{for }\eta\in M,\\
\xi(t) = \kappa^{-1}(t)&\mbox{for }t\in P,
\end{cases}\]
where the inputs
%\[ \xi(\eta) = \eta\mbox{ for }\eta\in M\mbox{ and }\xi(\gamma) = \kappa^{-1}(\gamma)\mbox{ for }\gamma\in P.\]
are regarded as elements of $N$ and the outputs as elements of $M\rtimes_\beta P$. The relation $t\eta t^{-1}=\alpha_t(\eta)$ in $N$ is preserved under $\xi$ because
\[ \xi(t)\xi(\eta)\xi(t)^{-1} = \kappa^{-1}(t) \eta \kappa^{-1}(t)^{-1} = \beta_{\kappa^{-1}(t)}(\eta)  = \alpha_t(\eta) = \xi(\alpha_t(\eta)).\]
It follows that $\xi$ extends to a homomorphism from $N$ to $M\rtimes_\beta P$, which is easily seen to be an isomorphism.
\end{proof}

\begin{proof}[Proof of (b)] We saw in the proof of Lemma \ref{alpha lemma}(b) that $\{\phi_u\}_{u\in U(e)}$ contains a Sylow $2$-subgroup of $\Aut(M)$. Since the order of $\alpha_s$ divides $4$, there exists $\pi\in \Aut(M)$ such that $\pi \alpha_s\pi^{-1}\in\{\phi_u\}_{u\in U(e)}$. Let us take
\[ \beta \in \Hom(P,\Aut(M));\hspace{1em}\beta(t) = \pi \alpha(t) \pi^{-1}.\]
Then clearly $\beta_s = \pi \alpha_s\pi^{-1} \in \{\phi_u\}_{u\in U(e)}$. To show that $N\simeq M\rtimes_\beta P$, define
\[\begin{cases}
\xi(\eta) = \pi(\eta) & \mbox{for }\eta\in M,\\
\xi(t) = t&\mbox{for }t\in P,
\end{cases}\]
where the inputs are regarded as elements of $N$ and the outputs as elements of $M\rtimes_\beta P$. The relation $t\eta t^{-1}=\alpha_t(\eta)$ in $N$ is preserved under $\xi$ because
\[ \xi(t)\xi(\eta)\xi(t)^{-1} = t \pi(\eta) t^{-1} = \beta_t(\pi(\eta)) = \pi(\alpha_t(\eta)) = \xi(\alpha_t(\eta)).\]
It follows that $\xi$ extends to a homomorphism from $N$ to $M\rtimes_\beta P$, which is easily seen to be an isomorphism.
\end{proof}

\begin{prop}\label{exists prop}Assume that $\alpha_r=\mathrm{Id}_M$ and $\alpha_s\in\{\phi_u\}_{u\in U(e)}$. Then 
%\[ \begin{cases}
%\xi(\eta) = (\alpha_s\phi_{k}^{-1})(\eta)&\mbox{for }\eta\in M,\\
%\xi(r) = r^{-1},\, \xi(s) = rs
%\end{cases}\]
\[ \xi(\eta) = (\alpha_s\phi_{k}^{-1})(\eta)\mbox{ for }\eta\in M,\,\ \xi(r) = r^{-1},\,\ \xi(s) = rs\]
extend to an automorphism on $N$ of order dividing $2d$, and 
\begin{equation}\label{N set} N = \{ \eta_0\xi(\eta_0)\cdots \xi^{\ell-1}(\eta_0):\ell\in\mathbb{N}\}\mbox{ with }\eta_0\xi(\eta_0)\cdots \xi^{n-1}(\eta_0)=1\end{equation}
for the element $\eta_0 = xyrs$ and for $n = 2^med$.
\end{prop}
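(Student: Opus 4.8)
The plan is to prove the three assertions in turn: that $\xi$ defines an automorphism of $N$, that its order divides $2d$, and that the partial products of $\eta_0$ run through all of $N$ with the length-$n$ product equal to $1$.

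First I would show $\xi$ extends to an endomorphism by checking it respects the defining relations of $N=M\rtimes_\alpha P$. Writing $\alpha_s=\phi_u$, where $u^2\equiv 1\pmod e$ since $\alpha_s$ has order dividing $2$ by Lemma~\ref{alpha lemma}(c), we have $\xi|_M=\alpha_s\phi_k^{-1}=\phi_w$ with $w\equiv uk^{-1}\pmod e$; this is already an automorphism of $M$ by Proposition~\ref{Aut prop}, so the relations internal to $M$ need no checking. As $\xi(r)=r^{-1}$ and $\xi(s)=rs$ lie in $P$, the map $\xi$ preserves both $M$ and $P$, and the relations internal to $P$ are short computations, the only mildly delicate one being $\xi(s)^2=(rs)^2=s^2$, which matches $\xi(s^2)$ after using $r^{-2^{m-2}}=r^{2^{m-2}}$ in the quaternion case. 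The cross relations $txt^{-1}=\alpha_t(x)$ and $tyt^{-1}=\alpha_t(y)$ are preserved using $\alpha_r=\mathrm{Id}_M$ (so $r$ commutes with $M$) and $\alpha_s=\phi_u$ (so $s$ fixes $y$ and sends $x\mapsto x^u$); the identity $\xi(sxs^{-1})=x^{uw}=\xi(s)\xi(x)\xi(s)^{-1}$ is precisely what forces the choice $w\equiv uk^{-1}$. Since $\xi(x),\xi(y),\xi(r),\xi(s)$ generate $N$, the endomorphism $\xi$ is surjective, hence an automorphism.

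For the order I would compute $\xi^2$, which fixes $y,r,s$ and sends $x\mapsto x^{w^2}$; thus $\xi^2$ acts on $N$ as $\phi_{w^2}$ and trivially on $P$, so its order is $\mathrm{ord}_e(w^2)$. As $u^2\equiv 1$ gives $w^2\equiv k^{-2}\pmod e$, this order equals $\mathrm{ord}_e(k^2)$, which divides $\mathrm{ord}_e(k)$ and hence $d$. Therefore $\xi^{2d}=\mathrm{Id}_N$ and the order of $\xi$ divides $2d$; note also $2d\mid n=2^med$, so $\xi^n=\mathrm{Id}_N$, a fact used at the end.

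The main work, and the step I expect to be the real obstacle, is the enumeration. Using that $\xi$ preserves $M$ and $P$, I would write $\xi^j(\eta_0)=(x^{w^j}y,\,b_j)$ in the coordinates of $M\rtimes_\alpha P$, where $b_j\in P$. The automorphism induced by $\xi$ on $P$ sends $rs\mapsto s$ and $s\mapsto rs$, so the sequence $b_j$ alternates between $rs$ and $s$; hence the $P$-partial-products $B_\ell=b_0\cdots b_{\ell-1}$ are products of elements lying off $\langle r\rangle$, so $B_\ell\in\langle r\rangle$ for $\ell$ even and $B_\ell\notin\langle r\rangle$ for $\ell$ odd. A short computation (the relevant step being $rs\cdot s=r^{c}$ with $c=1$ for $D_{2^m}$ and $c=1+2^{m-2}$ for $Q_{2^m}$, both odd) shows $\ell\mapsto B_\ell$ is a bijection $\mathbb{Z}/2^m\to P$.

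Expanding the $M$-component $A_\ell=\prod_{j<\ell}\alpha_{B_j}(x^{w^j}y)$, note that $\alpha_r=\mathrm{Id}_M$ makes $\alpha$ trivial on $\langle r\rangle$ and equal to $\phi_u$ off it, so $\alpha_{B_j}=\mathrm{Id}_M$ for $j$ even and $\alpha_{B_j}=\phi_u$ for $j$ odd. Thus $A_\ell=\prod_{j<\ell}x^{\epsilon_j}y$ with $\epsilon_j=w^j$ for $j$ even and $\epsilon_j=uw^j$ for $j$ odd, and collapsing this product via $y x^a y^{-1}=x^{ak}$ yields $A_\ell=x^{E_\ell}y^{\ell}$ with $E_\ell=\sum_{j<\ell}\epsilon_j k^j$. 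The crux is the identity $\epsilon_j k^j\equiv 1\pmod e$ for every $j$: for $j$ even, $\epsilon_j k^j=(wk)^j=u^j\equiv 1$, and for $j$ odd, $\epsilon_j k^j=u(wk)^j=u^{j+1}\equiv 1$, both because $wk=u$ and $u^2\equiv 1$. Hence $E_\ell\equiv\ell\pmod e$, and $P_\ell=\eta_0\xi(\eta_0)\cdots\xi^{\ell-1}(\eta_0)=(x^{E_\ell}y^{\ell},B_\ell)$ is determined by the triple $(\ell\bmod e,\ \ell\bmod d,\ \ell\bmod 2^m)$. Since $e,d,2^m$ are pairwise coprime ($ed$ is odd with $\gcd(e,d)=1$), the Chinese Remainder Theorem makes $\ell\mapsto P_\ell$ injective on $\{0,\dots,n-1\}$, hence a bijection onto $N$; and $P_n=1$ because $e,d,2^m$ each divide $n$. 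Finally $\xi^n=\mathrm{Id}_N$ together with $P_n=1$ gives $P_{\ell+n}=P_\ell$, so $\{P_\ell:\ell\in\mathbb{N}\}=N$, completing the argument.
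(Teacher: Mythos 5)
Your proof is correct, and it follows the same overall skeleton as the paper's — check the defining relations to get the automorphism, bound the order via the restrictions to $M$ and to $P$, then compute the partial products $\eta_0\xi(\eta_0)\cdots\xi^{\ell-1}(\eta_0)$ explicitly and separate exponents using coprimality — but your execution of the enumeration step is genuinely different in organization and arguably cleaner. The paper proves the closed formula $\eta_0\xi(\eta_0)\cdots\xi^{\ell-1}(\eta_0)=x^\ell y^\ell r^{\lceil \ell/2\rceil}s^\ell$ by induction on $\ell$ (separate odd/even cases), and then, to deduce $\ell_1\equiv\ell_2\pmod{2^m}$ from equality of two products, it must untangle the identity $r^{(\ell_1-\ell_2)/2}=s^{\ell_2-\ell_1}$, which in the quaternion case needs a dedicated contradiction argument ruling out $\ell_2-\ell_1\equiv 2\pmod 4$ (since $\langle r\rangle\cap\langle s\rangle\neq 1$ there). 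You instead work in semidirect-product coordinates: the $P$-component $B_\ell$ is shown outright to give a bijection $\mathbb{Z}/2^m\to P$, with the quaternion subtlety absorbed once and for all into the odd constant $c$ in $rs\cdot s=r^c$, while the $M$-component collapses to $x^\ell y^\ell$ via the congruence $\epsilon_jk^j\equiv 1\pmod e$; the Chinese Remainder Theorem then finishes, and your two displays agree with the paper's formula on inspection. One side remark of yours is wrong, though harmless: the cross relation $\xi(sxs^{-1})=\xi(s)\xi(x)\xi(s)^{-1}$ holds for \emph{every} $\phi_w$ on $M$ (both sides equal $x^{uw}$ identically, using $\alpha_r=\mathrm{Id}_M$), so nothing at that stage ``forces'' $w\equiv uk^{-1}\pmod e$; the specific choice $\xi|_M=\alpha_s\phi_k^{-1}$ is forced only by the enumeration, precisely through the identity $\epsilon_jk^j\equiv 1$ (i.e.\ $wk=u$ with $u^2\equiv 1$) that you correctly single out as the crux.
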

\begin{proof}First, a straightforward calculation shows that the relations in $P$ are preserved under $\xi$ (c.f. Proposition \ref{DQ prop}(c)). Put $\pi = \alpha_s\phi_k^{-1}$. That $\alpha_r = \mathrm{Id}_M$ implies the relation $r \eta r^{-1} = \alpha_r(\eta) = \eta$ is preserved under $\xi$ because
\[ \xi(r) \xi(\eta) \xi(r)^{-1} = r^{-1} \pi(\eta) r = \alpha_{r^{-1}}(\pi(\eta)) = \pi(\eta) = \xi(\eta).\]
Similarly, that $\alpha_s\in \{\phi_u\}_{u\in U(e)}$ implies $\alpha_s$ and $\pi$ commute, so $s\eta s^{-1} = \alpha_s(\eta)$ is also preserved under $\xi$ because then
\[ \xi(s) \xi(\eta) \xi(s)^{-1} = rs \pi(\eta) s^{-1} r^{-1} = (\alpha_r\alpha_s\pi)(\eta) = (\pi\alpha_s)(\eta) = \xi(\alpha_s(\eta)).\]
It follows that $\xi$ extends to an endomorphism on $N$, which clearly has to be an automorphism. That $\alpha_s\in \{\phi_u\}_{u\in U(e)}$ implies $\alpha_s$ and $\phi_k^{-1}$ commute, so
\[ \pi^{2d} = (\alpha_s\phi_{k}^{-1})^{2d} = \alpha_s^{2d}\phi_{k^{2d}}^{-1} =\mathrm{Id}_M.\]
Here $\alpha_s^2 = \mathrm{Id}_M$ by Lemma \ref{alpha lemma}(c) and $k^d\equiv 1\pmod{e}$ because $\mathrm{ord}_e(k)$ divides $d$. Since $\xi^2$ is clearly the identity on $P$, indeed $\xi$ has order dividing $2d$.

\vspace{1.5mm}

Next, we shall use induction on $\ell \in \mathbb{N}$ to show that
\begin{equation}\label{product}
(xyrs)\xi(xyrs)\cdots \xi^{\ell-1}(xyrs) = \begin{cases}
x^\ell y^\ell r^{\frac{\ell+1}{2}}s^{\ell} &\mbox{for $\ell$ odd},\\
x^\ell y^\ell r^{\frac{\ell}{2}}s^{\ell}&\mbox{for $\ell$ even}.
\end{cases}\end{equation}
The case $\ell=1$ is clear. For $\ell$ odd, observe that
\[ \xi^{\ell}(xyrs) = \pi^{\ell}(xy)\cdot r^{-1}\cdot rs = (\alpha_s^\ell\phi_{k^\ell}^{-1})(xy)s = (\alpha_s\phi_{k^\ell}^{-1})(xy)s.\]
Assuming that (\ref{product}) holds for $\ell$, we compute that
\begin{align*}
(xyrs)\xi(syrs)\cdots \xi^\ell(xyrs)& =x^{\ell}y^{\ell}r^{\frac{\ell+1}{2}}s^\ell \cdot (\alpha_s\phi_{k^\ell}^{-1})(xy)s\\
& = x^\ell y^\ell \cdot  (\alpha_r^{\frac{\ell+1}{2}}\alpha_s^{\ell+1}\phi_{k^{-\ell}})(xy)\cdot  r^{\frac{\ell+1}{2}}s^{\ell}\cdot s\\
& = x^\ell y^\ell \cdot x^{k^{-\ell}}y\cdot r^{\frac{\ell+1}{2}}s^{\ell+1} \hspace{1em} (\mbox{since }\alpha_r,\alpha_s^2=\mathrm{Id}_M)\\
& = x^{\ell+1}y^{\ell+1}r^{\frac{\ell+1}{2}} s^{\ell+1}
\end{align*}
and so (\ref{product}) also holds for $\ell+1$. Similarly, for $\ell$ even, observe that
\[ \xi^\ell(xyrs) = \pi^\ell(xy) \cdot r\cdot s  = (\alpha_s^{\ell} \phi_{k^\ell}^{-1})(xy) rs = \phi_{k^\ell}^{-1}(xy) rs.\]
Assuming that (\ref{product}) holds for $\ell$, we compute that
\begin{align*}
(xyrs)\xi(xyrs) \cdots \xi^{\ell}(xyrs) 
& = x^{\ell}y^{\ell}r^{\frac{\ell}{2}}s^\ell \cdot \phi_{k^\ell}^{-1}(xy)rs\\
& = x^{\ell}y^{\ell}\cdot (\alpha_r^{\frac{\ell}{2}}\alpha_s^\ell\phi_{k^{-\ell}})(xy)\cdot r^{\frac{\ell}{2}}s^\ell\cdot rs\\
& = x^\ell y^\ell \cdot x^{k^{-\ell}}y\cdot r^{\frac{\ell+2}{2}} s^{\ell+1}\hspace{1em} (\mbox{since }\alpha_r,\alpha_s^2=\mathrm{Id}_M)\\
& = x^{\ell+1} y^{\ell+1} r^{\frac{\ell+2}{2}} s^{\ell+1}.
\end{align*}
%\[ \xi^{\ell}(xyrs) = \pi^{\ell}(xy)\cdot r\cdot s = (\alpha_s^\ell \phi_{k^\ell}^{-1})(xy)rs = \phi_{k^\ell}^{-1}(xy)rs.\]
and so (\ref{product}) also holds for $\ell+1$. Hence, by induction, indeed we have (\ref{product}) for all $\ell\in \mathbb{N}$, and this immediately implies the second equality in (\ref{N set}).

\vspace{1.5mm}

To show the first equality in (\ref{N set}), since $N$ has order $n = 2^med$, it suffices to show that the set in (\ref{N set}) has at least $n$ elements. So suppose that
\begin{equation}\label{xi eqn} (xyrs)\xi(xyrs)\cdots \xi^{\ell_1-1}(xyrs) = (xyrs)\xi(xyrs)\cdots \xi^{\ell_2-1}(xyrs).\end{equation}
By (\ref{product}), this implies that $s^{\ell_1} \equiv s^{\ell_2}\pmod{\langle r\rangle}$ in the group $P$. But then $\ell_1,\ell_2$ have the same parity because $\langle s\rangle\cap \langle r\rangle = \langle s^2 \rangle$. Again by (\ref{product}), we have
\[ \begin{cases}
x^{\ell_1}y^{\ell_1}r^{\frac{\ell_1+1}{2}} s^{\ell_1} = x^{\ell_2}y^{\ell_2}r^{\frac{\ell_2+1}{2}} s^{\ell_2} &\mbox{for $\ell_1,\ell_2$ odd},\\
x^{\ell_1}y^{\ell_1}r^{\frac{\ell_1}{2}} s^{\ell_1} = x^{\ell_2}y^{\ell_2}r^{\frac{\ell_2}{2}} s^{\ell_2} &\mbox{for $\ell_1,\ell_2$ even}.
\end{cases}\]
Since $N = M\rtimes_\alpha P$ and $M = \langle x\rangle\rtimes \langle y\rangle$, in both cases, we see that $x^{\ell_1} = x^{\ell_2}$ \par\noindent  and $y^{\ell_1} = y^{\ell_2}$, which respectively imply that
\[ \ell_1 \equiv \ell_2\pmmod{e}\mbox{ and }\ell_1\equiv \ell_2\pmmod{d}.\]
In both cases, we also have $r^{\frac{\ell_1-\ell_2}{2}} = s^{\ell_2-\ell_1}$. Let us now prove that $s^{\ell_2-\ell_1} =1$ so in particular $r^{\frac{\ell_1-\ell_2}{2}}=1$. Note that $\ell_2 - \ell_1$ is always even.
\begin{itemize}
\item For $P = D_{2^m}$ with $m\geq 2$, since $s$ has order $2$, clearly $s^{\ell_2 - \ell_1}=1$.
\item For $P=Q_{2^m}$ with $m\geq 3$, since $s$ has order $4$, clearly $s^{\ell_2 - \ell_1}=1$ unless $\ell_2-\ell_1\equiv 2\pmod{4}$. So suppose  that $\ell_2-\ell_1\equiv2\pmod{4}$. Then
\[ r^{\frac{\ell_1-\ell_2}{2}} = s^{\ell_2-\ell_1-2}\cdot s^2 =  r^{2^{m-2}}\mbox{ and so } \frac{\ell_1-\ell_2}{2} \equiv 2^{m-2}\pmmod{2^{m-1}}.\]
But $m-1\geq 2$, so we obtain $\ell_1-\ell_2\equiv 0\pmod{4}$, which is a contradiction. This means that $\ell_2-\ell_1\equiv2\pmod{4}$ does not occur.
\end{itemize}
We have thus shown that $r^{\frac{\ell_1-\ell_2}{2}}=1$, which implies
\[ \frac{\ell_1-\ell_2}{2} \equiv 0\pmmod{2^{m-1}}\mbox{ and thus }\ell_1\equiv \ell_2 \pmmod{2^m}.\]
Since $2^m,e,d$ are pairwise coprime, we deduce that $\ell_1\equiv\ell_2\pmod{n}$. Therefore, indeed the set in (\ref{N set}) contains at least $n$ distinct elements.
\end{proof}

\section{Proof of Theorem \ref{main thm}}

Let $N$ be a non-$C$-group of order $n$. By Theorem \ref{pre thm}, we may assume that 
\[ N = M \rtimes_\alpha P \mbox{ with }\alpha\in\Hom(P,\Aut(M)),\]
where $M$ is a $C$-group of odd order, and $P$ is either $D_{2^m}$ with $m\geq 2$ or $Q_{2^m}$ with $m\geq 3$.  We wish to show that $(C_n,N)$ is realizable if and only if
\begin{equation}\label{reduction}
\begin{cases}
\alpha(P)\mbox{ has order $1$ or $2$} &\mbox{when }P=D_4\mbox{ or }P=Q_8,\\
\alpha(r)=\mathrm{Id}_M&\mbox{otherwise}.
\end{cases} 
\end{equation}
The main ingredients are Propositions \ref{P' prop}, \ref{iso prop}, and \ref{exists prop}.

%\subsection{Necessary condition on $\alpha$}

First, suppose that $(C_n,N)$ is realizable. By Proposition \ref{regular lem}, this implies that there exist $\ff\in\Hom(C_n,\Aut(N))$ and a bijective $\fg\in Z_\ff^1(C_n,N)$. Notice that $M_0=M\rtimes_\alpha P'$ is a characteristic subgroup of $N$ and put $H=\fg^{-1}(M_0)$, which is a subgroup of $C_n$ by Proposition \ref{char prop}. Trivially $H$ lies in the center of $C_n$, so by the proof of Proposition \ref{char prop}(b), the homomorphism
\[ \overline{\ff}_{M_0}\in\Hom(C_n/H,\Aut(N/M_0));\hspace{1em}
\overline{\ff}_{M_0}(\sigma H) = (\eta M_0 \mapsto \ff(\sigma)(\eta) M_0)\]
and the bijective crossed homomorphism
\[\overline{\fg}_{M_0}\in Z^1_{\overline{\ff}_{M_0}}(C_n/H,N/M_0);\hspace{1em}
\overline{\fg}_{M_0}(\sigma H) = \fg(\sigma)M_0\]
are well-defined. Note that $\overline{\ff}_{M_0}$ cannot be trivial, for otherwise $\overline{\fg}_{M_0}$ would be an isomorphism by (\ref{g relation}), which cannot happen because $N/M_0\simeq P/P'\simeq D_4$ by Lemma \ref{commutator lem} while $C_n/H$ is cyclic.

\vspace{1.5mm}

Now, assume for contradiction that (\ref{reduction}) does not hold. This implies that $\ker(\alpha) = P'$ when $P = D_4$ or $P=Q_8$ in view of (\ref{ker poss}), and that $\alpha(r)\neq\mathrm{Id}_M$ otherwise. It follows from Proposition \ref{P' prop} that the canonical homomorphism
\[\begin{tikzcd}[column sep = 3.5cm]
\Aut(N)\arrow{r}{\xi\mapsto (\eta M_0\mapsto \xi(\eta)M_0)} &\Aut(N/M_0)\arrow[equal]{r}{\text{identification}} & \Aut(P/P').
\end{tikzcd}\]
is trivial. But then $\overline{\ff}_{M_0}$ would be trivial, which we know is impossible. This implies that (\ref{reduction}) must hold, as desired.

\vspace{1.5mm}

%\subsection{Sufficient condition on $\alpha$}

Conversely, assume that (\ref{reduction}) holds. By Proposition \ref{iso prop}, we may modify $\alpha$ if necessary so that the hypothesis of Proposition \ref{exists prop} is satisfied. We then deduce that there exist $\xi\in \Aut(N)$ and $\eta_0\in N$ such that
\begin{enumerate}[(i)]
\item $\xi^{n} = \mathrm{Id}_N$ and $\eta_0\xi(\eta_0)\cdots \xi(\eta_0)^{n-1}=1$;
\item $N = \{\eta_0\xi(\eta_0)\cdots \xi^{\ell-1}(\eta_0):\ell\in\mathbb{N}\}$.
\end{enumerate}
Consider $\rho(\eta_0)\xi$, which is an element of $\Hol(N)$. For any $\ell\in\mathbb{N}$, we have
\[ (\rho(\eta_0)\xi)^\ell= \rho(\eta_0\xi(\eta_0)\cdots \xi^{\ell-1}(\eta_0))\cdot\xi^\ell.\]
Then $\rho(\eta_0)\xi $ has order dividing $n$ by (i), and $\langle\rho(\eta_0)\xi\rangle$ acts transitively on $N$ by (ii). It follows that $\langle \rho(\eta_0)\xi\rangle$ is in fact a regular subgroup of $\Hol(N)$ whose order is exactly $n$. This proves that  $(C_n,N)$ is realizable.

\section*{Acknowledgments} 

This work was supported by JSPS KAKENHI (Grant-in-Aid for Research Activity Start-up) Grant Number 21K20319. %(Grant-in-Aid for Research Activity Start-up).
The author thanks the referee for suggesting the discussion in Remark \ref{remark Rump} and the proof in Remark \ref{pre remark}.


\begin{thebibliography}{99}

\bibitem{AB}
A. A. Alabdali and N. P. Byott, \emph{Counting Hopf-Galois structures on cyclic field extensions of squarefree degree}, J. Algebra 493 (2018), 1--19.

\bibitem{AB'}
A. A. Alabdali and and N. P. Byott, \emph{Hopf-Galois structures of squarefree degree}, J. Algebra 559 (2020), 58--86.

\bibitem{AB''}
A. A. Alabdali and and N. P. Byott, \emph{Skew braces of squarefree order}, J. Algebra Appl. 20 (2021), no. 7, Paper No. 2150128, 21 pp. 

\bibitem{Bachiller}
D. Bachiller, \emph{Counterexample to a conjecture about braces}, J. Algebra 453 (2016), 160--176.

\bibitem{magma}
W. Bosma, J. Cannon, and C. Playoust, \emph{The Magma algebra system. I. The user language}, J. Symbolic Comput., 24 (1997), 235--265.

\bibitem{By96}
N. P. Byott, \emph{Uniqueness of Hopf Galois structure for separable field extensions}, Comm. Algebra 24 (1996), no. 10, 3217--3228. Corrigendum, \emph{ibid.} no. 11, 3705.

\bibitem{Byott 2}
N. P. Byott, \emph{Hopf-Galois structures on almost cyclic field extensions of $2$-power degree}, J. Algebra 318 (2007), 351--371.

\bibitem{Byott nilpotent}
N. P. Byott, \emph{Nilpotent and abelian Hopf-Galois structures on field extensions}, J. Algebra 381 (2013), 131--139.

\bibitem{Byott simple}
N. P. Byott, \emph{Hopf-Galois structures on field extensions with simple Galois groups}, Bull. London Math.
Soc. 36 (2004), no. 1, 23--29.

\bibitem{Byott soluble}
N. P. Byott, \emph{Solubility criteria for Hopf-Galois structures}, New York J. Math. 21 (2015), 883--903.

\bibitem{BC}
N. P. Byott and L. N. Childs, \emph{Fixed-point free pairs of homomorphisms and nonabelian Hopf-Galois structures}, New York J. Math. 18 (2012), 707--731.

\bibitem{Childs book}
L. N. Childs, \emph{Taming wild extensions: Hopf algebras and local Galois module theory}. Mathematical Surveys and Monographs, 80. American Mathematical Society, Providence, RI, 2000.

\bibitem{biskew}
L. N. Childs, \emph{Bi-skew braces and Hopf Galois structures}, New York J. Math. 25 (2019), 574--588.

\bibitem{Conrad D}
K. Conrad, \emph{Dihedral groups II}, online notes, retrieved on December 16, 2021.
\\\url{https://kconrad.math.uconn.edu/blurbs/grouptheory/dihedral2.pdf}
\bibitem{Conrad Q}
K. Conrad, \emph{Generalized quaternions}, online notes, retrieved on December 16, 2021.
\\\url{https://kconrad.math.uconn.edu/blurbs/grouptheory/genquat.pdf}

%\bibitem{GAP}

\bibitem{FCC}
S. C. Featherstonhaugh, A. Caranti, and L. N. Childs, \emph{Abelian Hopf Galois structures on prime-power Galois field extensions}, Trans. Amer. Math. Soc. 364 (2012), no. 7, 3675--3684.

\bibitem{Skew braces}
L. Guarnieri and L. Vendramin, \emph{Skew braces and the Yang-Baxter equation}, Math. Comp. 86 (2017), no. 307, 2519--2534.

\bibitem{Kohl}
T. Kohl, \emph{Classification of the Hopf Galois structures on prime power radical extensions}, J. Algebra 207 (1998), 525--546.

%\bibitem{Kohl ANT}
%T. Kohl, \emph{Regular permutation groups of order mp and Hopf Galois structures}, Algebra Number Theory 7 (2013), no. 9, 2203--2240. 

%\bibitem{book}
%G. A. Miller, H. F. Blichfeldt, and L. E. Dickson, \emph{Theory and applications of finite groups}. John Wiley \& Sons, New York, 1916.

\bibitem{MM}
M. Ram Murty and V. Kumar Murty, \emph{On groups of squarefree order}, Math. Ann. 267 (1984), 299--309.

\bibitem{Rump}
W. Rump, \emph{Classification of cyclic braces, II.}, Trans. Amer. Math. Soc. 372 (2019), no. 1, 305--328.

\bibitem{Nasy}
T. Nasybullov, \emph{Connections between properties of the additive and the multiplicative groups of a two-sided skew brace}, J. Algebra 540 (2019), 156--167.

\bibitem{NZ}
K. Nejabati Zenouz, \emph{On Hopf-Galois structures and skew braces of order $p^3$}, Ph.D. thesis, University of Exeter (2018).

\bibitem{GT book}
E. Schenkman, \emph{Group theory}. D. Van Nostrand Co., Inc., Princeton, N.J.-Toronto, Ont.-London 1965.

%\bibitem{Skew}
%A. Smoktunowicz and L. Vendramin, \emph{On skew braces (with an appendix by N. Byott and L. Vendramin}, J. Comb. Algebra 2 (2018), no. 1, 47--86. 

\bibitem{Tsang HG}
C. Tsang, \emph{Non-existence of Hopf-Galois structures and bijective crossed homomorphisms}, J. Pure Appl. Algebra 223 (2019), no. 7, 2804--2821. 

\bibitem{Tsang Sn}
C. Tsang, \emph{Hopf-Galois structures on a Galois $S_n$-extension}. J. Algebra 531 (2019), 349--360.

\bibitem{Tsang solvable}
C. Tsang and C. Qin, \emph{On the solvability of regular subgroups in the holomorph of a finite solvable group}, Internat. J. Algebra Comput. 30 (2020), no. 2, 253--265.  

\bibitem{Tsang ASG}
C. Tsang, \emph{Hopf-Galois structures on finite extensions with almost simple Galois group}. J. Number Theory 214 (2020), 286--311.

\bibitem{Tsang QS}
C. Tsang, \emph{Hopf-Galois structures on finite extensions with quasisimple Galois group}, Bull. London Math. Soc. 53 (2021), no. 1, 148--160.

\end{thebibliography}
\end{document}